\documentclass[12pt,a4paper]{article}%
\usepackage[utf8]{inputenc}
\usepackage{hyperref}
\usepackage{amsmath}
\usepackage{amsfonts}
\usepackage{amssymb}
\usepackage{xcolor}
\usepackage{graphicx}%
\usepackage{array}
\usepackage{lineno}
\providecommand{\U}[1]{\protect\rule{.1in}{.1in}}
\newtheorem{theorem}{Theorem}

\newtheorem{example}[theorem]{Example}

\newtheorem{lemma}[theorem]{Lemma}

\newtheorem{problem}[theorem]{Problem}
\newtheorem{proposition}[theorem]{Proposition}

\newenvironment{proof}[1][Proof]{\noindent\textbf{#1.} }{\ \hfill \rule{0.5em}{0.5em}\bigskip}
\graphicspath{{../Slike/}{Slike/}}

\begin{document}

\title{Spreads of degrees in graphs}
\author{Yair Caro$^{1}$, Riste \v{S}krekovski$^{2,3,4}$, Christina Zarb$^{5}$\\[0.3cm]
{\small $^{1}$ \textit{University of Haifa-Oranim, Kiryat Tiv'on, Israel}}\\[0.1cm]
{\small $^{2}$ \textit{University of Ljubljana, Faculty of Mathematics and Physics,
Ljubljana, Slovenia}}\\[0.1cm]
{\small $^{3}$ \textit{Faculty of Information Studies, Novo Mesto, Slovenia}}\\[0.1cm]
{\small $^{4}$ \textit{Rudolfovo -- Science and Technology Centre Novo Mesto, Slovenia}}\\[0.1cm]
{\small $^{5}$ \textit{Department of Mathematics, University of Malta, Malta}}}
\date{}
\maketitle

\begin{abstract}
For a graph $G$ and a set $B\subseteq V(G)$, the spread $\mathrm{sp}(B)$ of $B$ is the
difference between the largest and the smallest degree in $G$ of a vertex of $B$, and for an
integer $k\geq0$ the parameter $\mathrm{sp}(G,k)$ is the largest cardinality of a set $B$ with
$\mathrm{sp}(B)\leq k$. Caro, Lauri and Zarb derived a lower bound for $\mathrm{sp}(G,k)$ and, among several families of graphs, considered
\[
\mathrm{MOP}(n,k)=\min \{\mathrm{sp}(G,k):G\text{ is a maximal outerplanar graph of order
}n\}
\]
and determined $\mathrm{MOP}(n,k)$ up to an additive constant for every $k\not =2,$ leaving the
case $k=2$ open, with the bounds $4n/9\leq \mathrm{MOP}(n,2)\leq (5n+19)/11$. We first prove a lower bound on $\mathrm{sp}(G,k)$ for an arbitrary graph $G$ in
terms of its order $n$, its number of edges $m$ and its minimum degree $\delta $. This lower bound contains the bounds of Caro, Lauri and
Zarb and, for $k=0$, the bound $\mathrm{rep}(G)\geq \left\lceil n/(2d-2\delta +1)\right\rceil $ of Caro and West, where
$d=2m/n$.  We determine when
this lower bound is attained, exhibit explicit graphs attaining it, and show that it is exact for all graphs once
$n\geq n_{0}(\delta ,k,d)$. We then apply the bound to maximal outerplanar graphs: adjusting the count to this class we prove
\[
\mathrm{MOP}(n,2)\geq \left\lceil \frac{4n+10}{9}\right\rceil \qquad \text{for every }n\geq 14,
\]
with equality for $n\equiv 2\ (\mathrm{mod}\ 18)$, and $\mathrm{MOP}(n,2)=4n/9+O(1)$ for every $n$.
\end{abstract}

\textit{Keywords:} vertex degrees; spread of degrees; repetition number; maximal outerplanar
graphs.

\textit{AMS Subject Classification numbers:} 05C07, 05C10

\section{Introduction}

Every graph on at least two vertices has two vertices of the same degree. This classroom observation
has been generalised in several directions, among them a characterisation of the graphs with only one repeated pair of
degrees \cite{BehzadChartrand}, the characterisation of graphic sequences
\cite{ErdosGallai, Hakimi, Havel}, and the study of the repetition number $\mathrm{rep}(G)$, the
maximum multiplicity of a value in the degree sequence of $G$, introduced by Caro and West
\cite{CaroWest}; for the degrees of maximal outerplanar graphs in particular see Jao and West
\cite{JaoWest}.

The generalisation we consider here is due to Erd\H{o}s, Chen, Rousseau and Schelp \cite{Erdos} and
was developed further by Caro, Lauri and Zarb \cite{CaroLauriZarb}. Let $G=(V,E)$ be a graph. For
$B\subseteq V$, the \emph{spread} of $B$ is
\[
\mathrm{sp}(B)=\max_{u\in B}\deg _{G}(u)-\min_{v\in B}\deg _{G}(v),
\]
and for an integer $k\geq 0$ we set
\[
\mathrm{sp}(G,k)=\max \{\left\vert B\right\vert :B\subseteq V(G)\text{ and }\mathrm{sp}
(B)\leq k\}.
\]
Thus $\mathrm{sp}(G,0)=\mathrm{rep}(G)$, and the first elementary result of graph theory reads
$\mathrm{sp}(G,0)\geq 2$ for every graph of order at least two. Erd\H{o}s, Chen, Rousseau and
Schelp proved that $\mathrm{sp}(G,k)\geq k+2$ whenever $G$ has at least $k+2$ vertices, and a short
proof avoiding the Erd\H{o}s--Gallai theorem is given in \cite{CaroLauriZarb}.

Notice that, writing $n_{j}$ for the number of vertices of degree $j$ in $G$, we have
\begin{equation}
\mathrm{sp}(G,k)=\max_{p\geq 0}\ (n_{p}+n_{p+1}+\cdots +n_{p+k}),  \label{For_window}%
\end{equation}

so that $\mathrm{sp}(G,k)$ depends on the degree sequence of $G$ only. We shall use
(\ref{For_window}) throughout, and we call the sets of $k+1$ consecutive degrees occurring in
(\ref{For_window}) \emph{windows}.

A graph is \emph{maximal outerplanar}, a MOP for short, if it is outerplanar and the
addition of any edge destroys outerplanarity; equivalently, a MOP of order $n\geq 3$ is a
triangulation of a convex $n$-gon. Notice that a MOP $G$ of order $n$ has $2n-3$ edges, so
$\sum_{v}\deg (v)=4n-6$, that $\delta (G)=2$, and that $G$ has $n-2$ triangles. The vertices of
degree $2$ are called \emph{ears}. For a MOP, the neighbourhood of every vertex $v$ induces a path,
the \emph{fan} at $v$, and consequently the number of triangles containing $v$ equals $\deg (v)-1$.

In \cite{CaroLauriZarb} the quantity
\[
\mathrm{MOP}(n,k)=\min \{\mathrm{sp}(G,k):G\text{ is a MOP of order }n\}
\]
was introduced, and the following bounds were established:
\[
\mathrm{MOP}(n,0)>\frac{n}{5},\qquad \mathrm{MOP}(n,1)\geq \frac{n}{3}+1,\qquad \mathrm{MOP}%
(n,2)\geq \frac{4n}{9},
\]
\[
\mathrm{MOP}(n,k)\geq \frac{(k-2)n}{k-1}\quad \text{for }k\geq 3.
\]
All of them, with the single exception of
$k=2$, are sharp up to a small additive constant. For $k=2$ the best construction known gives
$\mathrm{MOP}(n,2)\leq (5n+19)/11$ for $n\equiv 5\ (\mathrm{mod}\ 11)$, and the authors of
\cite{CaroLauriZarb} explicitly ask for the correct order of magnitude of $\mathrm{MOP}(n,2)$.

The paper has two parts. In Section \ref{Sec_general} we prove a lower bound on
$\mathrm{sp}(G,k)$ for an arbitrary graph $G$ in terms of its order $n$, its number of edges $m$ and
its minimum degree $\delta $ (Theorem \ref{Tm_gen}). It contains the bounds of \cite{CaroLauriZarb}
and, for $k=0$, the bound $\mathrm{rep}(G)\geq \left\lceil n/(2d-2\delta +1)\right\rceil $ of Caro and West
\cite{CaroWest}, where $d=2m/n$, and it refines them by an explicit nonnegative error term; we determine exactly when
it is attained and exhibit graphs attaining it for the corresponding parameters.   {This goal  is achieved  step by step, dividing Section 2 into three parts each having a target, and proving these targets gradually converge to this  goal.   The first target  in Section 2.1 is to obtain a lower-bound on $\mathrm{sp}(G,k)$ in terms of $n$, $m$ and $\delta$ and to optimize this bound regardless of whether the optimal sequences of non-negative integers used are graphical or not.  This is obtained in Theorem \ref{Tm_gen} and the lemmas following it.  The second target  in Section 2.2 aims to determine more precisely what these optimal sequences look like, still without the restriction of being graphic.  This is done via a series of technical lemmas culminating in Theorem \ref{Tm_sequence}.  The third target in Section 2.3  aims to show that the optimal sequences determined in Theorem \ref{Tm_sequence} are indeed  graphical, as finally proved by raising the sequence (Lemma \ref{Lemma_raise}) and applying the graphicality criterion of Lemma \ref{Lemma_ZZ}, and summarized in Theorem \ref{Tm_exact} providing $n \geq n_{0}(\delta ,k,d)$; explicit extremal graphs are given in Proposition \ref{Prop_construction}.}

The second part, Section \ref{Sec_mop}, determines $\mathrm{MOP}(n,2)$ up to an additive constant for every $n$,
and exactly for $n\equiv 2\ (\mathrm{mod}\ 18)$. The count of Theorem \ref{Tm_gen} adjusted to maximal outerplanar
graphs gives $\mathrm{MOP}(n,2)\geq \left\lceil (4n+6)/9\right\rceil $, and a weight using one further window
together with an ear-peeling argument raises this to
$\mathrm{MOP}(n,2)\geq \left\lceil (4n+10)/9\right\rceil $; we show that this bound has the right linear term.
Its equality analysis prescribes the
degree sequence which a maximal outerplanar graph attaining $4n/9$ must have; this degree sequence
cannot be realised exactly, but four exceptional vertices suffice, and we construct maximal outerplanar
graphs attaining the lower bound for $n\equiv 2\ (\mathrm{mod}\ 18)$ in Section \ref{Sec_construction};
for every $n$ the bound is attained up to an additive constant (Theorem \ref{Tm_main}); the remaining residue classes
can be handled by the same ladder with modified end caps, which we do not include here. In
particular the lower bound $4n/9$ of \cite{CaroLauriZarb} is asymptotically sharp.

The application of Theorem \ref{Tm_gen} to maximal planar graphs, Problem 2 of \cite{CaroLauriZarb}, is the
subject of the companion paper \cite{P2}.

\section{A lower bound for arbitrary graphs}

\label{Sec_general}

Our starting point is a lower bound on $\mathrm{sp}(G,k)$ for an arbitrary graph $G$ in terms of its
order, its size and its minimum degree. It is proved by a weighted count of windows of consecutive
degrees aligned at the minimum degree, it contains the bounds of \cite{CaroLauriZarb} and, for
$k=0$, the bound of Caro and West \cite{CaroWest} as special cases, and we determine exactly when it
is attained. The application in Section \ref{Sec_mop}, and the application to maximal planar graphs in the
companion paper \cite{P2}, adjust this count to the class of graphs at hand.

\subsection{The bound}

Throughout this section $G$ is an arbitrary graph of order $n$ with $m$ edges, minimum degree
$\delta =\delta (G)$, maximum degree $\Delta =\Delta (G)$ and average degree $d=d(G)=2m/n$. We fix
$k\geq 0$ and write $h=k+1$ for the width of a window. The \emph{aligned windows} are
\[
I_{i}=\{\delta +ih,\ \delta +ih+1,\ \ldots ,\ \delta +ih+k\},\qquad W_{i}=\sum_{j\in I_{i}}n_{j}%
\qquad (i\geq 0),
\]
so that $\sum_{i\geq 0}W_{i}=n$ and, by (\ref{For_window}), $W_{i}\leq \mathrm{sp}(G,k)$ for every
$i$.  {Let the indicator of $x$  with respect to $i$ be denoted by $\mu(x,i)$, where $\mu(x,i)=1$ if $x \in I_i$ and 0 otherwise.}  For an integer $t\geq 1$ put
\[
{u_{t}(x)=\sum_{i=0}^{t-1}(t-i)\cdot\mu(x,i),\qquad \varepsilon _{t}(x)=h\,u_{t}%
(x)-\left( \delta +th-x\right) .}
\]

{Observe that for $i \neq  j$, the intervals $I_i$ and $I_j$ are disjoint namely $|I_i \cap I_j | = 0$.}
\begin{lemma}
\label{Lemma_gen}For every integer $x\geq \delta $,
\[
\varepsilon _{t}(x)=\left\{
\begin{array}
[c]{ll}%
(x-\delta )\ \mathrm{mod}\ h, & \text{if }\delta \leq x<\delta +th,\\
x-\delta -th, & \text{if }x\geq \delta +th.
\end{array}
\right.
\]
In particular $\varepsilon _{t}(x)\geq 0$, that is $h\,u_{t}(x)\geq \delta +th-x$, with equality if
and only if $x=\delta +ih$ for some $0\leq i\leq t$.
\end{lemma}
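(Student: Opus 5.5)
The plan is a direct computation based on the fact that the aligned windows $I_0,I_1,\dots$ are pairwise disjoint and cover every integer $x\ge\delta$. For $x\geq\delta$ write $x-\delta=qh+r$ with $q\geq 0$ and $0\leq r\leq k=h-1$, so $r=(x-\delta)\bmod h$. Then $x\in I_q$ and $x\notin I_i$ for $i\neq q$, hence $\mu(x,i)=1$ exactly when $i=q$. The sum defining $u_t(x)$ therefore collapses to a single term: $u_t(x)=t-q$ if $q\leq t-1$, and $u_t(x)=0$ if $q\geq t$.

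We then treat the two cases separately. If $\delta\le x<\delta+th$, then $q\le t-1$, and
\[
\varepsilon_t(x)=h(t-q)-(\delta+th-x)=x-\delta-qh=r .
\]
If $x\ge \delta+th$, then $q\ge t$, so $u_t(x)=0$ and
\[
\varepsilon_t(x)=-(\delta+th-x)=x-\delta-th .
\]
This gives both lines of the displayed formula.

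For the final assertion, nonnegativity is immediate: in the first case $\varepsilon_t(x)=r\ge0$, and in the second case $x-\delta-th\ge0$. Equality in the first case means $r=0$, that is $x=\delta+qh$ with $0\le q\le t-1$. Equality in the second case means $x=\delta+th$. Together these give exactly $x=\delta+ih$ with $0\le i\le t$.

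There is no real obstacle here. The only point needing care is the collapse of the sum, which relies on the disjointness of the $I_i$ noted just before the lemma and on the fact that their union is $\{\delta,\delta+1,\dots\}$. The boundary value $x=\delta+th$ must also be placed in the second case, where $q=t$ and the sum is empty.
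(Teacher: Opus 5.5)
Your proof is correct and follows the paper's argument essentially verbatim. You locate $x$ in its unique window $I_q$, collapse $u_t(x)$ to $t-q$ or $0$, read off $\varepsilon_t(x)$ in the two cases, and get the equality conditions $r=0$, respectively $x=\delta+th$. One cosmetic point: for $x=\delta+th$ the sum defining $u_t$ is not empty; it has all $t$ terms equal to zero.
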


\begin{proof}
Let $\delta \leq x<\delta +th$ and write $x=\delta +jh+r$ with $0\leq j\leq t-1$ and
$0\leq r\leq h-1$. Then $x\in I_{j}$ and $x\not \in I_{i}$ for $i\not =j$, so $u_{t}(x)=t-j$ and
$\varepsilon _{t}(x)=h(t-j)-\delta -th+x=x-\delta -jh=r$. If $x\geq \delta +th$, then $x$ lies in no
$I_{i}$ with $i\leq t-1$, so $u_{t}(x)=0$ and $\varepsilon _{t}(x)=x-\delta -th$. In both cases
$\varepsilon _{t}(x)\geq 0$, and it vanishes exactly when $r=0$, respectively when $x=\delta +th$.
\end{proof}

\begin{theorem}
\label{Tm_gen}Let $G$ be a graph of order $n$ with $m$ edges and minimum degree $\delta $, and let
$k\geq 0$ and $t\geq 1$. Then
\[
\mathrm{sp}(G,k)\ \geq \ \frac{2\left( n\left( \delta +t(k+1)\right) -2m+\sum_{v\in V(G)}%
\varepsilon _{t}(\deg (v))\right) }{(k+1)\,t(t+1)} .
\]
\end{theorem}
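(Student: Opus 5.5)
The plan is a weighted double count of the aligned windows $W_0,\dots,W_{t-1}$, with Lemma \ref{Lemma_gen} supplying the pointwise inequality that turns degrees into window counts.

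First, bound $\sum_{i=0}^{t-1}(t-i)W_i$ from above. Each $W_i\le \mathrm{sp}(G,k)$ by (\ref{For_window}), so
\[
\sum_{i=0}^{t-1}(t-i)W_i\le \mathrm{sp}(G,k)\sum_{i=0}^{t-1}(t-i)=\mathrm{sp}(G,k)\,\frac{t(t+1)}{2}.
\]

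Second, rewrite the same sum vertex by vertex. Since the windows $I_i$ are pairwise disjoint, $\sum_{i=0}^{t-1}(t-i)W_i=\sum_{v\in V(G)}u_t(\deg(v))$. By the definition of $\varepsilon_t$ we have $h\,u_t(x)=\delta+th-x+\varepsilon_t(x)$ exactly, for every $x\geq\delta$. Every vertex has degree at least $\delta$, so we may substitute $x=\deg(v)$ and sum over all vertices. Using $\sum_v\deg(v)=2m$, this gives
\[
h\sum_{i=0}^{t-1}(t-i)W_i=n(\delta+th)-2m+\sum_{v}\varepsilon_t(\deg(v)).
\]
Combining the two steps yields $h\,\mathrm{sp}(G,k)\,t(t+1)/2\ge n(\delta+th)-2m+\sum_v\varepsilon_t(\deg v)$. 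Dividing by $h\,t(t+1)/2$ with $h=k+1$ gives the claimed bound.

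There is no real obstacle here. The only point needing care is the second step, which requires that the $I_i$ are disjoint (so each vertex contributes exactly $u_t(\deg v)$) and that all degrees are at least $\delta$ (so the identity for $h\,u_t$ applies). The nonnegativity part of Lemma \ref{Lemma_gen} is not needed for the inequality itself. It only explains why the $\varepsilon_t$ term is a nonnegative error: dropping it recovers the cruder bound $2(n(\delta+th)-2m)/(h\,t(t+1))$.
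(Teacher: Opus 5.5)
Your proof is correct and is essentially the paper's own argument: you bound $\sum_{i<t}(t-i)W_i$ by $\frac{t(t+1)}{2}\,\mathrm{sp}(G,k)$, rewrite it as $\sum_v u_t(\deg v)$, and evaluate that using the defining identity $h\,u_t(x)=\delta+th-x+\varepsilon_t(x)$ together with $\sum_v\deg(v)=2m$. One small correction: the identity $\sum_{i<t}(t-i)W_i=\sum_v u_t(\deg v)$ is only an interchange of summation, and the identity for $h\,u_t$ holds by definition for every $x$, so neither step actually needs the disjointness of the $I_i$ or the hypothesis $x\geq\delta$ that you say require care.
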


\begin{proof}
{By counting in two ways,} the definition of $u_{t}$ and by $W_{i}\leq \mathrm{sp}(G,k)$,
\[
\sum_{v}u_{t}(\deg (v))=\sum_{i=0}^{t-1}(t-i)W_{i}\leq \left( \sum_{i=0}^{t-1}(t-i)\right)
\mathrm{sp}(G,k)=\frac{t(t+1)}{2}\,\mathrm{sp}(G,k).
\]
On the other hand, the definition of $\varepsilon _{t}$ says precisely that
\[
h\,u_{t}(x)=\left( \delta +th-x\right) +\varepsilon _{t}(x)\qquad \text{for every integer }x\geq \delta .
\]
Applying this to $x=\deg (v)$ for each vertex $v$ and adding the $n$ identities,
\begin{align*}
h\sum_{v}u_{t}(\deg (v))\  &=\ \sum_{v}\Bigl[ \bigl( \delta +th-\deg (v)\bigr) +\varepsilon _{t}(\deg (v))\Bigr] \\
&=\ \sum_{v}\left( \delta +th\right) \ -\ \sum_{v}\deg (v)\ +\ \sum_{v}\varepsilon _{t}(\deg (v))\\
&=\ n(\delta +th)-2m+\sum_{v}\varepsilon _{t}(\deg (v)),
\end{align*}
the last line because the first sum has $n$ equal terms and $\sum_{v}\deg (v)=2m$. Recall that $h=k+1$;
comparing the two displays and dividing by $h\,t(t+1)/2$ gives the asserted bound.
\end{proof}

Since $\varepsilon _{t}\geq 0$, Theorem \ref{Tm_gen} contains the bound
\begin{equation}
\mathrm{sp}(G,k)\ \geq \ \frac{2n\left( \delta +t(k+1)-d\right) }{(k+1)\,t(t+1)},
\label{For_clz4}%
\end{equation}
which is precisely the bound (4) of \cite{CaroLauriZarb}. Thus Theorem \ref{Tm_gen} refines every
bound derived there from {(4) of \cite{CaroLauriZarb}}, the gain being the explicit term
$\sum_{v}\varepsilon _{t}(\deg (v))$, which measures how far the degrees of $G$ are from the
arithmetic progression $\delta ,\delta +(k+1),\ldots ,\delta +t(k+1)$. We write
\[
f_{t}=f_{t}(n,m,\delta ,k)=\frac{2\left( n(\delta +t(k+1))-2m\right) }{(k+1)\,t(t+1)}=\frac{2n(t-x)}%
{t(t+1)},\qquad x=\frac{d-\delta }{k+1}\geq 0,
\]
for the right-hand side of (\ref{For_clz4}). Notice that in \cite{CaroLauriZarb} the integer $t$ is
tied to $\mathrm{sp}(G,k)$ by $n=t\,\mathrm{sp}(G,k)+b$, whereas Theorem \ref{Tm_gen} holds for
every $t\geq 1$, so that one may take the maximum over $t$. The best $t$ is easily located.

\begin{lemma}
\label{Lemma_bestt}For real $x\geq 0$ and integer $t\geq 1$ we have $f_{t}\geq f_{t+1}$ if and only
if $t\geq 2x$, with equality if and only if $t=2x$. Hence $\max_{t\geq 1}f_{t}$ is attained at
$t=\max \{1,\left\lceil 2x\right\rceil \}$, also at $t=2x+1$ when $2x$ is a positive integer, and at no
other $t$.
\end{lemma}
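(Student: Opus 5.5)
We compare $f_t$ and $f_{t+1}$ directly from the closed form $f_t = 2n(t-x)/(t(t+1))$. We may assume $n>0$, so the positive factor $2n$ can be dropped. Then $f_t \ge f_{t+1}$ is equivalent to
\[
\frac{t-x}{t(t+1)} \ \ge\ \frac{t+1-x}{(t+1)(t+2)} .
\]
Multiplying by the positive quantity $t(t+1)(t+2)$, this becomes $(t-x)(t+2)\ge t(t+1-x)$. Expanding both sides gives $t^2+2t-xt-2x \ge t^2+t-xt$, that is, $t\ge 2x$. The same computation with equality signs shows that $f_t=f_{t+1}$ exactly when $t=2x$. This proves the first assertion.

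For the location of the maximum, we read the sequence $(f_t)_{t\ge1}$ as unimodal. For integers $t<2x$ we have $f_t<f_{t+1}$, so the sequence strictly increases. At an integer $t=2x$ (if any) we have $f_t=f_{t+1}$. For $t>2x$ the sequence strictly decreases.

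\textbf{Case $2x$ a positive integer.} Put $t_0=2x=\lceil 2x\rceil$. The values strictly increase up to $f_{t_0}$, then $f_{t_0}=f_{t_0+1}$, and they strictly decrease after $t_0+1$. Hence the maximum is attained exactly at $t_0$ and at $t_0+1=2x+1$.

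\textbf{Case $2x$ not an integer.} Let $t_0=\max\{1,\lceil 2x\rceil\}$.
\begin{itemize}
\item If $t_0>1$, the sequence strictly increases for all $t<2x$, in particular for $t\le t_0-1$. It strictly decreases for $t\ge t_0>2x$. So $t_0$ is the unique maximiser.
\item If $t_0=1$, then $2x<1$ (this includes $x=0$). Every $t\ge1$ then satisfies $t>2x$, so the sequence strictly decreases from the start. The unique maximum is at $t=1=\max\{1,\lceil 2x\rceil\}$.
\end{itemize}

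\textbf{Case $x=0$.} Here $2x=0$ is an integer but not a positive one, and the condition $t=2x$ can never hold for $t\ge1$. So this case falls under the strictly decreasing situation above, which is consistent with the statement's requirement that $2x$ be \emph{positive}.

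The only delicate point is this bookkeeping at the boundary $t=1$ and the distinction between $2x$ integral and non-integral. Uniqueness ("at no other $t$") follows from the strictness of the monotonicity on each side. No earlier result of the paper beyond the definition of $f_t$ is needed.
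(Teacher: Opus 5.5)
Your proof is correct and follows the paper's argument essentially step for step. You use the same cross-multiplication reducing $f_t\ge f_{t+1}$ to $t\ge 2x$, followed by the same unimodality case analysis; you split by integrality of $2x$ and treat $x=0$ separately, while the paper splits into $2x<1$, $2x\ge 1$ non-integral, and $2x\ge 1$ integral. The only blemish is the parenthetical ``this includes $x=0$'' in your non-integral case, which is inconsistent since $2x=0$ is an integer, but your separate treatment of $x=0$ already covers it.
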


\begin{proof}
Multiplying by $t(t+1)(t+2)/(2n)>0$, the inequality $f_{t}\geq f_{t+1}$ is equivalent to
$(t-x)(t+2)\geq (t+1-x)t$, that is to $t^{2}+2t-xt-2x\geq t^{2}+t-xt$, that is to $t\geq 2x$; the
two sides are equal exactly when $t=2x$. Consequently the sequence $(f_{t})_{t\geq 1}$ is strictly
increasing while $t<2x$, satisfies {$f_{t}=f_{t+1}=n/(2x+1)$} for $t=2x$, and is strictly decreasing once
$t>2x$. {However $t$ must be a positive integer hence we write} $T=\max \{1,\left\lceil 2x\right\rceil \}$ and distinguish three cases.

If $2x<1$, then $T=1$ and every $t\geq 1$ satisfies $t>2x$, so $f_{1}>f_{2}>\cdots $ and the
maximum is attained at $t=1=T$ only.

If $2x\geq 1$ is not an integer, then $T=\left\lceil 2x\right\rceil \geq 2$ and $T-1<2x<T$. For
$1\leq t\leq T-1$ we have $t<2x$ and hence $f_{t}<f_{t+1}$, and for $t\geq T$ we have $t>2x$ and
hence $f_{t}>f_{t+1}$. Thus $f_{1}<\cdots <f_{T}>f_{T+1}>\cdots $, and the maximum is attained at
$t=T$ only.

If $2x\geq 1$ is an integer, then $T=2x$. As before $f_{t}<f_{t+1}$ for $t<2x$ and
$f_{t}>f_{t+1}$ for $t>2x$, while now $t=2x$ is the equality case, so that
\[
f_{1}<f_{2}<\cdots <f_{2x}=f_{2x+1}>f_{2x+2}>\cdots .
\]
Hence the maximum is attained exactly at the two values $t=2x$ and $t=2x+1$, its value being
\[
f_{t}=f_{2x}=\frac{2nx}{2x(2x+1)}=\frac{n}{2x+1}=\frac{2n(x+1)}{(2x+1)(2x+2)}=f_{t+1}.
\] The three cases exhaust the possibilities, and the maximum is
attained at two values of $t$ exactly in the third of them, that is exactly when $2x$ is a positive
integer.
\end{proof}

\begin{proposition}
\label{Prop_compare}Let $x=(d-\delta )/(k+1)$.

\begin{itemize}
\item[\textrm{(i)}] If $2x$ is an integer, then $f_{2x+1}=n/(2x+1)$, and also $f_{2x}=n/(2x+1)$
when $2x\geq 1$.

\item[\textrm{(ii)}] For every real $x\geq 0$,
\[
\max_{t\geq 1}f_{t}\ \geq \ \frac{n}{2x+1}=\frac{n(k+1)}{2d-2\delta +k+1},
\]
with equality if and only if $2x$ is an integer.

\item[\textrm{(iii)}] $\mathrm{sp}(G,k)\geq n(k+1)/(2\Delta -2d+k+1)$.
\end{itemize}

\end{proposition}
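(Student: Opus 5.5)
For (i) I would simply evaluate the definition $f_{t}=2n(t-x)/(t(t+1))$ at $t=2x+1$, which gives $2n(x+1)/((2x+1)(2x+2))=n/(2x+1)$. When $2x\geq 1$, Lemma \ref{Lemma_bestt} (the equality case $t=2x$) gives $f_{2x}=f_{2x+1}$. Alternatively, one computes directly $f_{2x}=2n\cdot x/(2x(2x+1))=n/(2x+1)$.

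For (ii) I would compare $\max_t f_t$ with $n/(2x+1)$ by treating $t$ as a real variable. Put $g(t)=2n(t-x)/(t(t+1))$ for real $t>0$. The inequality $g(t)\leq n/(2x+1)$ is equivalent to $2(t-x)(2x+1)\leq t(t+1)$, that is, to
\[
t^{2}-(4x+1)t+2x(2x+1)\geq 0 .
\]
This quadratic factors as $(t-2x)(t-2x-1)$. So $g(t)\leq n/(2x+1)$ except for $t$ strictly between $2x$ and $2x+1$, where the inequality is strict in the opposite direction; equality holds exactly at $t\in\{2x,2x+1\}$.

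The two cases then follow. If $2x$ is not an integer, the open interval $(2x,2x+1)$ contains exactly one integer, namely $\lceil 2x\rceil\geq 1$, and $f$ at that integer exceeds $n/(2x+1)$. Hence $\max_t f_t>n/(2x+1)$ strictly. If $2x$ is an integer, no integer lies strictly inside the interval, so every $f_t\leq n/(2x+1)$, and equality is attained at $t=2x+1$ by (i). This gives the "if and only if". Rewriting $n/(2x+1)$ with $x=(d-\delta)/(k+1)$ yields $n(k+1)/(2d-2\delta+k+1)$.

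For (iii) I would combine (ii) with Theorem \ref{Tm_gen}, giving $\mathrm{sp}(G,k)\geq n(k+1)/(2d-2\delta+k+1)$, and then apply the same bound to the complement $\overline{G}$. Complementation sends $\deg$ to $n-1-\deg$, so windows of degrees map to windows of the same width and $\mathrm{sp}(\overline G,k)=\mathrm{sp}(G,k)$. Moreover $\delta(\overline G)=n-1-\Delta$ and $d(\overline G)=n-1-d$, so $d(\overline G)-\delta(\overline G)=\Delta-d$. This produces exactly $n(k+1)/(2\Delta-2d+k+1)$. Nothing here is really an obstacle. The only points needing care are the factorisation in (ii), which is what makes the strict/equality dichotomy clean, and checking that the complement preserves the spread parameter.
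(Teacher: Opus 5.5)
Your proposal is correct and matches the paper's proof: (i) is the same direct evaluation and (iii) is the same complement argument, and in (ii) your factorisation $(t-2x)(t-2x-1)$ is, up to sign, exactly the paper's quadratic $(\sigma+1)(2t-\sigma)-t(t+1)$ with $\sigma=2x$. The only small difference is that you read off the upper bound $f_t\le n/(2x+1)$ in the integer case directly from the sign of this factorisation, whereas the paper obtains it from Lemma~\ref{Lemma_bestt}.
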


\begin{proof}
(i) Both assertions are a single computation. For $t=2x+1$, which is a positive integer for every
$x\geq 0$ with $2x$ an integer,
\[
f_{2x+1}=\frac{2n\bigl( (2x+1)-x\bigr) }{(2x+1)(2x+2)}=\frac{2n(x+1)}{2(2x+1)(x+1)}=\frac{n}{2x+1};
\]
and if moreover $2x\geq 1$, so that $t=2x$ is a positive integer as well,
\[
f_{2x}=\frac{2n(2x-x)}{2x(2x+1)}=\frac{n}{2x+1} .
\]
The restriction $2x\geq 1$ in the second case is needed only because $t$ must be at least one.

(ii) For $x=0$ we have $f_{1}=n$. For $x>0$ put $t=\left\lceil 2x\right\rceil $ and $\sigma =2x$, so
that $t-1<\sigma \leq t$. The inequality $f_{t}\geq n/(2x+1)$ reads $g(\sigma ):=(\sigma +1)(2t-\sigma
)\geq t(t+1)$; the quadratic $g$ is concave with $g(t-1)=g(t)=t(t+1)$, so $g(\sigma )\geq t(t+1)$ on
$[t-1,t]$, with equality only at the end points. Hence $f_{t}\geq n/(2x+1)$, strictly when
$t-1<\sigma <t$, that is when $2x$ is not an integer. If $2x$ is an integer, then by Lemma
\ref{Lemma_bestt} the maximum is $f_{2x}=f_{2x+1}=n/(2x+1)$ by (i).

(iii) Recall from \cite{CaroLauriZarb} that $\mathrm{sp}(G,k)=\mathrm{sp}(\overline{G},k)$, because
$\deg _{G}(u)-\deg _{G}(v)=\deg _{\overline{G}}(v)-\deg _{\overline{G}}(u)$. The complement has
minimum degree $n-1-\Delta $ and average degree $n-1-d$, so (ii) applied to $\overline{G}$ gives the
bound.
\end{proof}

\subsection{Equality, and the bound as a statement about degree sequences}

The proof of Theorem \ref{Tm_gen} uses only the degree sequence of $G$. We therefore call a
nondecreasing sequence $D=(d_{1}\leq \cdots \leq d_{n})$ of integers with $d_{1}=\delta $ a
\emph{$\delta $-sequence}, and we call the largest number of its terms lying in an interval of $k+1$
consecutive integers its \emph{window number} $\mathrm{sp}(D,k)$; thus $\mathrm{sp}(G,k)=\mathrm{sp}%
(D,k)$ for the degree sequence $D$ of $G$, and Theorem \ref{Tm_gen} holds, with the same proof, for
every $\delta $-sequence with sum $2m$.  {The next proposition translates Theorem \ref{Tm_gen} into a structure for the extremal case when equality in the lower bound is attained.}

\begin{proposition}
\label{Prop_equality}Let $G$ be a graph of order $n$ with $m$ edges and minimum degree $\delta $, let
$k\geq 0$, $t\geq 1$, $h=k+1$ and $s=\mathrm{sp}(G,k)$.

\begin{itemize}
\item[\textrm{(i)}] Equality holds in Theorem \ref{Tm_gen} if and only if $W_{0}=W_{1}=\cdots
=W_{t-1}=s$.

\item[\textrm{(ii)}] Equality holds in (\ref{For_clz4}), that is $s=f_{t}$, if and only if every
degree of $G$ lies in $\{\delta ,\delta +h,\ldots ,\delta +th\}$ and
\[
n_{\delta }=n_{\delta +h}=\cdots =n_{\delta +(t-1)h}=s,\qquad n_{\delta +th}=n-ts\leq s.
\]
In particular equality forces $s=f_{t}$ to be an integer with $n/(t+1)\leq s\leq n/t$, and
$t-1\leq 2x\leq t$, {compatible with Lemma \ref{Lemma_bestt}.}
\end{itemize}
\end{proposition}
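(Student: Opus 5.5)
The plan is to trace the two inequalities in the proof of Theorem \ref{Tm_gen} and record exactly when each is tight. That proof consists of one exact identity,
\[
h\sum_{v}u_{t}(\deg(v))=n(\delta+th)-2m+\sum_{v}\varepsilon_{t}(\deg(v)),
\]
together with one inequality, $\sum_{i=0}^{t-1}(t-i)W_{i}\leq \frac{t(t+1)}{2}\,s$, which comes from $W_{i}\leq s$. Hence equality in Theorem \ref{Tm_gen} holds if and only if $\sum_{i}(t-i)(s-W_{i})=0$. Each term is nonnegative and each weight $t-i$ is positive for $0\leq i\leq t-1$. So this holds exactly when $W_{i}=s$ for all $i\leq t-1$, which proves (i).

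For (ii) I would write $f_{t}$ as the right-hand side of Theorem \ref{Tm_gen} minus the quantity $2\sum_{v}\varepsilon_{t}(\deg v)/(h\,t(t+1))$. This gives
\[
s\ \geq\ \text{(bound of Theorem \ref{Tm_gen})}\ \geq\ f_{t},
\]
and $s=f_{t}$ holds if and only if both inequalities are equalities. The second is an equality if and only if $\varepsilon_{t}(\deg v)=0$ for every $v$. By Lemma \ref{Lemma_gen} this means every degree lies in $\{\delta,\delta+h,\dots,\delta+th\}$. The first is an equality if and only if $W_{i}=s$ for $i<t$, by (i).

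Given the degree restriction, $W_{i}=n_{\delta+ih}$, so $n_{\delta+ih}=s$ for $0\leq i\leq t-1$. The remaining vertices have degree $\delta+th$, giving $n_{\delta+th}=n-ts$. Since $\{\delta+th\}$ lies in the window $I_{t}$, we also get $n-ts=W_{t}\leq s$. The converse direction is immediate: under these conditions all $\varepsilon_{t}$ vanish and all $W_{i}=s$ for $i<t$, so both inequalities are tight.

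For the final remarks, $s=f_{t}$ is an integer because $s$ is one. From $0\leq n-ts\leq s$ we get $n/(t+1)\leq s\leq n/t$. Substituting $f_{t}=2n(t-x)/(t(t+1))$ into these bounds:
\begin{itemize}
\item $2(t-x)/(t(t+1))\leq 1/t$ gives $2(t-x)\leq t+1$, that is $2x\geq t-1$;
\item $2(t-x)/(t(t+1))\geq 1/(t+1)$ gives $2(t-x)\geq t$, that is $2x\leq t$.
\end{itemize}
This is consistent with Lemma \ref{Lemma_bestt}, since $t\in\{\lceil 2x\rceil,\ 2x+1\}$ or $t=1$ when $x=0$. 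I do not expect a real obstacle. The only point needing care is the direction of the $\varepsilon$-term, namely that equality in (\ref{For_clz4}) forces equality both in Theorem \ref{Tm_gen} and in $\varepsilon_{t}\geq 0$ simultaneously.
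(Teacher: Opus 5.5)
Your proposal is correct and follows the paper's proof essentially step for step: (i) comes from the positivity of the weights $t-i$ in the single inequality $\sum_{i<t}(t-i)W_i\le \frac{t(t+1)}{2}s$, and (ii) comes from combining (i) with the vanishing of $\sum_v\varepsilon_t(\deg v)$ via Lemma \ref{Lemma_gen}. The only cosmetic difference is in deriving $t-1\le 2x\le t$: you substitute the closed form $f_t=2n(t-x)/(t(t+1))$ into $n/(t+1)\le s\le n/t$, whereas the paper computes the degree sum of the forced sequence, and the two computations are equivalent.
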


\begin{proof}
(i) The only inequality in the proof of Theorem \ref{Tm_gen} is $\sum_{i<t}(t-i)W_{i}\leq
\sum_{i<t}(t-i)s$, and its coefficients are positive.

(ii) The bound (\ref{For_clz4}) {of Theorem \ref{Tm_gen}} is obtained from (i) by dropping $\sum_{v}\varepsilon _{t}(\deg
(v))\geq 0$, and by Lemma \ref{Lemma_gen} this sum vanishes exactly when every degree is of the form
$\delta +ih$ with $0\leq i\leq t$. In that case $W_{i}=n_{\delta +ih}$, so (i) gives
$n_{\delta +ih}=s$ for $i<t$, hence $n_{\delta +th}=n-ts$, and the window
$[\delta +th,\delta +th+k]$ contains only the degree $\delta +th$, so $n-ts\leq s$. Conversely these
conditions give $\varepsilon _{t}\equiv 0$ and $W_{i}=s$. The degree sum of such a graph is
$\sum_{i<t}s(\delta +ih)+(n-ts)(\delta +th)=n(\delta +th)-sht(t+1)/2$, which is $2m$ exactly when
$s=f_{t}$; the inequalities $0\leq n-ts\leq s$ read $n/(t+1)\leq s\leq n/t$, and substituting them
into $d=2m/n$ gives $\delta +(t-1)h/2\leq d\leq \delta +th/2$, that is $t-1\leq 2x\leq t$.
\end{proof}

For $1\leq s\leq n$ put $t=\left\lfloor n/s\right\rfloor $ and $b=n-ts\in \lbrack 0,s)$, and let
the \emph{packed} $\delta $-sequence $P(n,s)$ consist of $s$ copies of each of $\delta ,\delta
+h,\ldots ,\delta +(t-1)h$ and $b$ copies of $\delta +th$; its sum is
\[
\Sigma (n,s)=n\delta +h\sum_{i=1}^{n}\left\lfloor \frac{i-1}{s}\right\rfloor =n\delta +h\left( \frac{%
st(t-1)}{2}+bt\right) .
\]
For $h=1$ these are exactly the \emph{packed lists} of Caro and West \cite{CaroWest}.

\begin{lemma}
\label{Lemma_sorted}Let $D$ be a $\delta $-sequence and $s\geq 1$. Then $\mathrm{sp}(D,k)\leq s$ if
and only if $d_{i+s}\geq d_{i}+h$ for all $1\leq i\leq n-s$. Consequently every $\delta $-sequence
with $\mathrm{sp}(D,k)\leq s$ satisfies $d_{i}\geq \delta +h\left\lfloor (i-1)/s\right\rfloor $ for
every $i$ and has sum at least $\Sigma (n,s)$, and $P(n,s)$ is the unique such sequence with sum
exactly $\Sigma (n,s)$; moreover $\mathrm{sp}(P(n,s),k)=s$.
\end{lemma}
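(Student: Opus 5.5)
The plan is to prove the characterisation first and derive the three consequences from it in order.

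\emph{The characterisation.} Suppose first that $d_{i+s}\leq d_{i}+k$ for some $i$. Since $D$ is nondecreasing, the $s+1$ terms $d_{i},\ldots ,d_{i+s}$ all lie in the window $[d_{i},d_{i}+k]$ of $k+1$ consecutive integers. Hence $\mathrm{sp}(D,k)\geq s+1$.

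Conversely, suppose some window $[p,p+k]$ contains at least $s+1$ terms. Because $D$ is sorted, these terms occupy consecutive indices. Call the first of them $d_{i}$; then $d_{i+s}$ lies in the same window, so $d_{i+s}-d_{i}\leq k<h$. Thus $\mathrm{sp}(D,k)\leq s$ is equivalent to $d_{i+s}\geq d_{i}+h$ for all $1\leq i\leq n-s$. This is just the contrapositive in both directions.

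\emph{The pointwise lower bound.} I would prove $d_{i}\geq \delta +h\lfloor (i-1)/s\rfloor$ by induction on $i$. For $i\leq s$ the bound is $d_{i}\geq d_{1}=\delta$. For $i>s$, apply the condition at index $i-s$ to get $d_{i}\geq d_{i-s}+h$. The induction hypothesis gives $d_{i-s}\geq \delta +h\lfloor (i-s-1)/s\rfloor$, and $\lfloor (i-1)/s\rfloor =\lfloor (i-s-1)/s\rfloor +1$, which closes the step.

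\emph{The sum bound and uniqueness.} Summing the pointwise bound over $i$ gives exactly the first expression for $\Sigma (n,s)$. The closed form follows by grouping indices into blocks of size $s$: the block $\lfloor (i-1)/s\rfloor =j$ contributes $s$ indices for each $j=0,\ldots ,t-1$, and the last partial block contributes $b$ indices with $j=t$. This gives $st(t-1)/2+bt$.

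Equality in the sum forces equality in every termwise inequality, so $d_{i}=\delta +h\lfloor (i-1)/s\rfloor$ for all $i$. That sequence is precisely $P(n,s)$, which gives uniqueness.

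\emph{The packed sequence itself.} It remains to check that $P(n,s)$ qualifies and that its window number is exactly $s$. For the condition, the $i$-th term of $P(n,s)$ is $\delta +h\lfloor (i-1)/s\rfloor$, and shifting $i$ by $s$ raises the floor by exactly $1$. So $d_{i+s}=d_{i}+h$, the condition holds, and $\mathrm{sp}(P(n,s),k)\leq s$.

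For the reverse inequality, note that $t=\lfloor n/s\rfloor \geq 1$ because $s\leq n$. So the value $\delta$ appears $s$ times, and the window $[\delta ,\delta +k]$ contains $s$ terms. Hence $\mathrm{sp}(P(n,s),k)=s$.

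The only delicate point is the converse direction of the characterisation, namely that the terms in a window form a contiguous block of indices. This is immediate from monotonicity, so I do not expect any real obstacle.
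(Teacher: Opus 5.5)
Your proof is correct and follows the paper's argument essentially step by step: the same contiguity-of-sorted-terms argument for the characterisation, the same induction for $d_{i}\geq \delta +h\lfloor (i-1)/s\rfloor$, and termwise equality for uniqueness. The only cosmetic differences are these. You obtain $\mathrm{sp}(P(n,s),k)\leq s$ from the characterisation rather than from the paper's remark that the values of $P(n,s)$ differ by multiples of $h$. You also state explicitly the matching lower bound (the $s$ copies of $\delta$), which the paper leaves implicit.
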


\begin{proof}
If $d_{i+s}\leq d_{i}+h-1$, the $s+1$ terms $d_{i},\ldots ,d_{i+s}$ lie in the window $[d_{i},d_{i}+h-1]$.
Conversely, if a window contains $s+1$ terms, they are consecutive in the sorted order, say
$d_{i},\ldots ,d_{i+s}$, and $d_{i+s}-d_{i}\leq h-1$. The inequality
$d_{i}\geq \delta +h\left\lfloor (i-1)/s\right\rfloor $ follows from $d_{1}=\delta $ and
$d_{i+s}\geq d_{i}+h$ by induction, and it is an equality for every $i$ exactly for $P(n,s)$. Finally,
the values of $P(n,s)$ differ by multiples of $h$, so a window of $h$ consecutive integers contains at
most one of them, with multiplicity at most $s$.
\end{proof}

\begin{lemma}
\label{Lemma_packed}Let $1\leq s\leq n$, $t=\left\lfloor n/s\right\rfloor $ and $2m=\Sigma (n,s)$. Then
$f_{t}(n,m,\delta ,k)=s$ and $\max_{t^{\prime }\geq 1}f_{t^{\prime }}(n,m,\delta ,k)=s$.
\end{lemma}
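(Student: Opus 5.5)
Both claims can be read off from results already proved. First I show $f_t=s$ by a direct computation, then I get the maximum over $t'$ from Theorem \ref{Tm_gen} applied to the packed sequence.

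\emph{Step 1: $f_t=s$.} By definition,
\[
f_{t}=\frac{2\left(n(\delta+th)-\Sigma(n,s)\right)}{h\,t(t+1)} .
\]
Substitute $\Sigma(n,s)=n\delta+h\bigl(st(t-1)/2+bt\bigr)$ and $n=ts+b$. The numerator becomes
\[
2h\Bigl(t(ts+b)-\frac{st(t-1)}{2}-bt\Bigr)=2h\Bigl(t^{2}s-\frac{st(t-1)}{2}\Bigr)=h\,st(t+1),
\]
so $f_t=s$. This is purely algebraic.

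\emph{Step 2: $\max_{t'}f_{t'}=s$.} The lower bound $\max_{t'}f_{t'}\ge f_t=s$ is Step 1. For the upper bound I use the remark preceding Lemma \ref{Lemma_sorted}: Theorem \ref{Tm_gen} holds for every $\delta$-sequence with sum $2m$, graphic or not. I apply it to $D=P(n,s)$, whose sum is exactly $\Sigma(n,s)=2m$ and whose first term is $\delta$.

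\begin{itemize}
\item By Lemma \ref{Lemma_sorted}, $\mathrm{sp}(P(n,s),k)=s$.
\item By Lemma \ref{Lemma_gen}, $\varepsilon_{t'}\ge 0$, so Theorem \ref{Tm_gen} yields $s=\mathrm{sp}(D,k)\ge f_{t'}$ for every $t'\ge 1$.
\end{itemize}

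Hence $\max_{t'}f_{t'}\le s$, and combined with Step 1 we get equality.

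The only point requiring care is that the degree-sequence version of Theorem \ref{Tm_gen} is legitimate. Its proof uses only $W_i\le \mathrm{sp}(D,k)$ and the identity $\sum_v \deg(v)=2m$, both of which hold for any $\delta$-sequence with sum $2m$. I therefore expect no real obstacle beyond the algebra in Step 1.

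As a cross-check, Lemma \ref{Lemma_bestt} together with Proposition \ref{Prop_equality}(ii) confirms that $t-1\le 2x\le t$. This is consistent with $t$ being a maximizer of $f_{t'}$.
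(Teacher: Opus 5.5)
Your proof is correct and is essentially the paper's argument: you substitute $n=ts+b$ into $f_t$ to get $f_t=s$, then apply Theorem \ref{Tm_gen} to the $\delta$-sequence $P(n,s)$, whose window number is $s$ by Lemma \ref{Lemma_sorted}, to get $f_{t'}\le s$ for every $t'$. The closing cross-check via Lemma \ref{Lemma_bestt} is harmless but not needed.
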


\begin{proof}
With $b=n-ts$,
\[
f_{t}=\frac{2\left( n(\delta +th)-\Sigma (n,s)\right) }{ht(t+1)}=\frac{2\left( nt-st(t-1)/2-bt\right) %
}{t(t+1)}=\frac{2n-s(t-1)-2b}{t+1},
\]
and since $2n-2b=2ts$ the last fraction equals $s(t+1)/(t+1)=s$. Applying Theorem \ref{Tm_gen} to the $\delta $-sequence $P(n,s)$, whose window number is $s$ by
Lemma \ref{Lemma_sorted}, gives $f_{t^{\prime }}\leq s$ for every $t^{\prime }$.
\end{proof}

For $2m\geq n\delta $ let
\[
s^{\ast }(n,m,\delta ,k)=\min \{s\geq 1:\Sigma (n,s)\leq 2m\},
\]
which is well defined because $\Sigma (n,n)=n\delta $.

\begin{theorem}
\label{Tm_sequence}Let $n\geq 2$, $\delta \geq 0$, $k\geq 0$ and $2m\geq n\delta $. Then
\[
\max_{t\geq 1}\left\lceil f_{t}(n,m,\delta ,k)\right\rceil \ =\ s^{\ast }(n,m,\delta ,k)\ =\ \min
\{\mathrm{sp}(D,k):D\in \mathcal{D}\},
\]
where $\mathcal{D}$ is the set of all $\delta $-sequences of length $n$ with sum $2m$.
\end{theorem}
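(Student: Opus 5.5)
We prove the two equalities separately, writing $s^{\ast}=s^{\ast}(n,m,\delta ,k)$ and $M=\min \{\mathrm{sp}(D,k):D\in \mathcal{D}\}$, and we first observe that $\Sigma (n,s)$ is nonincreasing in $s$. This holds because each term $\left\lfloor (i-1)/s\right\rfloor $ in the formula for $\Sigma (n,s)$ is nonincreasing in $s$. Consequently $\{s:\Sigma (n,s)\leq 2m\}$ is an up-set $\{s^{\ast},s^{\ast}+1,\ldots ,n\}$.

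\emph{Step 1: $M=s^{\ast}$.} If $D\in \mathcal{D}$ has $\mathrm{sp}(D,k)=s$, then Lemma \ref{Lemma_sorted} gives $\Sigma (n,s)\leq 2m$, so $s\geq s^{\ast}$ and hence $M\geq s^{\ast}$. For the reverse, start from $P(n,s^{\ast})$, which has window number $s^{\ast}$ and sum $\Sigma (n,s^{\ast})\leq 2m$. Raise its largest term $d_{n}$ by the (even-free) deficit $2m-\Sigma (n,s^{\ast})$; any integer sum is allowed, since $\mathcal{D}$ only fixes the sum. Increasing the maximum term keeps the sequence nondecreasing and keeps $d_{1}=\delta $ when $n\geq 2$. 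It also preserves the criterion $d_{i+s}\geq d_{i}+h$ of Lemma \ref{Lemma_sorted}, since only left-hand sides grow. Hence $\mathrm{sp}\leq s^{\ast}$, and with $M\geq s^{\ast}$ we get equality.

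\emph{Step 2: $\max_{t}\lceil f_{t}\rceil \leq s^{\ast}$.} Apply Theorem \ref{Tm_gen}, which holds for $\delta $-sequences, to the sequence built in Step 1. This gives $f_{t}\leq s^{\ast}$ for all $t$, hence $\lceil f_{t}\rceil \leq s^{\ast}$ because $s^{\ast}$ is an integer.

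\emph{Step 3: $\max_{t}\lceil f_{t}\rceil \geq s^{\ast}$.} This is the main obstacle. Put $s=s^{\ast}-1\geq 1$; the case $s^{\ast}=1$ is trivial since $f_{1}=2n-2m/h\cdot\ldots $, and in any case $\lceil f_{t}\rceil \geq 1$ once positivity is checked. By minimality $\Sigma (n,s)>2m$, and we must find $t$ with $f_{t}(n,m)>s$. Take $t=\lfloor n/s\rfloor $. For fixed $t$, $f_{t}$ is strictly decreasing (affine) in $m$. Lemma \ref{Lemma_packed} gives $f_{t}(n,m_{0})=s$ at $2m_{0}=\Sigma (n,s)$, and since $2m<2m_{0}$ we obtain $f_{t}(n,m)>s$. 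Thus $\lceil f_{t}\rceil \geq s+1=s^{\ast}$.

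For the degenerate case $s^{\ast}=1$, note that Step 3 only needs $\lceil f_{t}\rceil \geq 1$ for some $t$. From $2m\leq $ the sum of any sequence with window number $1$, the value $f_{t}$ for $t\geq n$ is positive: $\Sigma (n,1)\leq 2m$ is what defines $s^{\ast}=1$, and with $t=n$, Lemma \ref{Lemma_packed} at $s=1$ gives $f_{n}(n,m_{1})=1$ at $2m_{1}=\Sigma (n,1)$. I would therefore handle this case by noting that $f_{t}$ can be compared directly, or simply that $\max_{t}\lceil f_{t}\rceil \geq 1$ fails only if every $f_{t}\leq 0$. That is impossible, since $f_{t}>0$ for $t>x$. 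The key care point is exactly this monotonicity in $m$ combined with the packed-sequence identity of Lemma \ref{Lemma_packed}.
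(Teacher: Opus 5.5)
Your proposal is correct and follows the paper's proof essentially step for step. You pin the minimum over $\mathcal{D}$ to $s^{\ast}$ via Lemma \ref{Lemma_sorted} and the raised packed sequence $P(n,s^{\ast})$, and apply Theorem \ref{Tm_gen} to that sequence to get $\max_t\lceil f_t\rceil\le s^{\ast}$. The reverse inequality comes from Lemma \ref{Lemma_packed} at $s=s^{\ast}-1$, $t=\lfloor n/s\rfloor$, together with the strict decrease of $f_t$ in $m$, and $s^{\ast}=1$ is settled by $f_t>0$ for $t>x$.

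The degenerate case needs cleaning up. Your intermediate claim that $f_t>0$ for $t\ge n$ is false when $x\ge n$, which can happen because $m$ is unbounded, and the remark about $f_1$ is garbled. Keep only the final argument with $t>x$. The monotonicity of $\Sigma(n,s)$ in $s$ is harmless but not needed.
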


\begin{proof}
Write $s^{\ast }=s^{\ast }(n,m,\delta ,k)$. A $\delta $-sequence with window number $s<s^{\ast }$
has sum at least $\Sigma (n,s)>2m$ by Lemma \ref{Lemma_sorted}, so the minimum on the right is at
least $s^{\ast }$. Conversely $P(n,s^{\ast })$ has sum $\Sigma (n,s^{\ast })\leq 2m$ and window number
$s^{\ast }$, and raising its largest term by one, as often as needed, produces a $\delta $-sequence
with sum $2m$ and window number at most $s^{\ast }$: raising the last term of a nondecreasing sequence
preserves the inequalities $d_{i+s}\geq d_{i}+h$ of Lemma \ref{Lemma_sorted}, and the minimum stays
$\delta $ because $n\geq 2$. So the minimum equals
$s^{\ast }$.

Theorem \ref{Tm_gen} applied to that sequence gives $f_{t}\leq s^{\ast }$ for every $t$, hence
$\max_{t}\left\lceil f_{t}\right\rceil \leq s^{\ast }$. For the reverse inequality, if $s^{\ast }=1$
then $f_{t}>0$ for $t>x$. If $s^{\ast }\geq 2$, put $s=s^{\ast }-1$ and $t=\left\lfloor
n/s\right\rfloor $; then $\Sigma (n,s)>2m$ by the minimality of $s^{\ast }$, and Lemma
\ref{Lemma_packed} gives
\[
f_{t}(n,m,\delta ,k)=\frac{2\left( n(\delta +th)-2m\right) }{ht(t+1)}>\frac{2\left( n(\delta
+th)-\Sigma (n,s)\right) }{ht(t+1)}=s=s^{\ast }-1,
\]
so $\left\lceil f_{t}\right\rceil \geq s^{\ast }$.
\end{proof}

 {We observe that, calling
$s^{\ast }(n,m,\delta ,k)$ the \emph{maximised bound}, we see that the maximised bound is attained by a graph of order
$n$ with $m$ edges and minimum degree $\delta $ if and only if some $\delta $-sequence with sum
	$2m$ and window number $s^{\ast }$ is the degree sequence of a graph.}

Theorem \ref{Tm_sequence} says that the only loss in Theorem \ref{Tm_gen} is graphicality: the
maximised bound is the exact minimum of the window number over all integer sequences with the given
$n$, $m$ and $\delta $. For $k=0$ this is the content of the \emph{packed lists} of
\cite[Section 2]{CaroWest}, and it also identifies $\max_{t}\left\lceil f_{t}\right\rceil $ with the
refined count in the proof of \cite[Lemma 2.1]{CaroWest}, which keeps a remainder term that the bound
$n/(2d-2\delta +1)$ drops; by Proposition \ref{Prop_compare}(ii) the two agree exactly when $2d$ is
an integer.

\subsection{Realisability}

We now show that the bound is attained whenever its equality sequence is graphic, that this is the
case for all large $n$, and that for large $n$ the maximised bound is exact. We use a classical
sufficient condition for graphicality, due to Zverovich and Zverovich
\cite{ZverovichZverovich}; we include its short proof for completeness.

\begin{lemma}
\label{Lemma_ZZ}Let $D=(d_{1}\geq \cdots \geq d_{n})$ be a sequence of integers with even sum,
largest term $\Delta $ and smallest term $\delta \geq 1$. If $n\geq (\Delta +\delta +1)^{2}/(4\delta
)$, then $D$ is the degree sequence of a graph.
\end{lemma}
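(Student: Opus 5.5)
The plan is to verify the Erd\H{o}s--Gallai inequalities directly:
\[
\sum_{i=1}^{r}d_{i}\ \leq\ r(r-1)+\sum_{i=r+1}^{n}\min\{r,d_{i}\}\qquad (1\leq r\leq n).
\]
By the Erd\H{o}s--Gallai theorem, together with the even sum hypothesis, these inequalities show that $D$ is graphic. We fix $r$ and estimate each side crudely. The left side is at most $r\Delta$. On the right, every term satisfies $\min\{r,d_{i}\}\geq\min\{r,\delta\}$. We therefore split into two cases according to whether $r\geq\delta$ or $r<\delta$.

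Suppose first that $r\leq\delta$. Then $\min\{r,d_{i}\}=r$ for all $i>r$, and the right side equals $r(r-1)+r(n-r)=r(n-1)$. So it suffices to have $\Delta\leq n-1$. This follows from the hypothesis, because
\[
(\Delta+\delta+1)^{2}\geq 4\delta(\Delta+1)
\]
by AM--GM applied to $\Delta+1$ and $\delta$. This gives $n\geq \Delta+1$.

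Suppose next that $r>\delta$. The right side is then at least $r(r-1)+\delta(n-r)$, so it suffices to show
\[
r\Delta\leq r(r-1)+\delta(n-r),
\]
that is, $g(r):=r^{2}-r(\Delta+\delta+1)+\delta n\geq 0$. We may also assume $r\leq\Delta$. Indeed, if $r>\Delta$ then already $r\Delta\leq r(r-1)$ when $r\geq\Delta+1$. The quadratic $g$ has minimum value $\delta n-(\Delta+\delta+1)^{2}/4$ over the reals. This minimum is nonnegative exactly when $n\geq(\Delta+\delta+1)^{2}/(4\delta)$, which is the hypothesis. Hence every Erd\H{o}s--Gallai inequality holds.

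The main point needing care is that the crude lower bound $\min\{r,d_{i}\}\geq\delta$ is valid only when $r\geq\delta$; this is why we separated the case $r<\delta$. The other thing to check is that the AM--GM step really yields $n\geq\Delta+1$ in the small-$r$ case. Both verifications are short.
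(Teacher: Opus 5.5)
Your proposal is correct and follows the paper's proof essentially step for step. Both verify the Erd\H{o}s--Gallai inequalities. For $r\leq\delta$ both obtain $n\geq\Delta+1$ from $(\Delta+\delta+1)^{2}-4\delta(\Delta+1)=(\Delta+1-\delta)^{2}\geq 0$, which is your AM--GM step. For $r>\delta$ both bound $j(\Delta+\delta+1-j)$ by $(\Delta+\delta+1)^{2}/4$. Your separate treatment of $r>\Delta$ is harmless but redundant, since the quadratic $g$ is nonnegative for every real $r$.
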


\begin{proof}
We verify the Erd\H{o}s--Gallai conditions \cite{ErdosGallai}
$\sum_{i\leq j}d_{i}\leq j(j-1)+\sum_{i>j}\min \{j,d_{i}\}$ for $1\leq j\leq n$. Notice first that
$(\Delta +\delta +1)^{2}-4\delta (\Delta +1)=(\Delta +1-\delta )^{2}\geq 0$, so the hypothesis gives
$n\geq \Delta +1$. If $j\geq \Delta +1$, the right-hand side is at least $j(j-1)\geq j\Delta $, which
is at least the left-hand side. If $j\leq \delta $, then $\min \{j,d_{i}\}=j$ and the right-hand side
is $j(j-1)+(n-j)j=j(n-1)\geq j\Delta $. If $\delta <j\leq \Delta $, then $\min \{j,d_{i}\}\geq \delta
$, and it suffices that $j\Delta \leq j(j-1)+(n-j)\delta $, that is $n\delta \geq j(\Delta +\delta
+1-j)$; the right-hand side is at most $(\Delta +\delta +1)^{2}/4$.
\end{proof}

\begin{theorem}
\label{Tm_realisable}Let $\delta \geq 1$, $k\geq 0$, $h=k+1$ and $t\geq 1$, and let $n\geq 2$ and $m$ be integers
with $2m\geq n\delta $. There is a graph $G$ of order $n$ with $m$ edges and minimum degree $\delta $ such that $\mathrm{sp}(G,k)=f_{t}(n,m,\delta ,k)$
if and only if there is an integer $s$ with
\[
\frac{n}{t+1}\leq s\leq \frac{n}{t}\qquad \text{and}\qquad 2m=n(\delta +th)-\frac{sht(t+1)}{2}
\]
such that the sequence $P$ consisting of $s$ copies of each of $\delta ,\delta +h,\ldots ,\delta
+(t-1)h$ and $n-ts$ copies of $\delta +th$ is graphic. This is the case whenever
\[
n\geq \frac{(2\delta +th+1)^{2}}{4\delta },
\]
and for $k=0$ it is the case if and only if $n$ exceeds the largest term of $P$
\cite[Theorem 2.4]{CaroWest}.
\end{theorem}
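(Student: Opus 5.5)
The equivalence follows from Proposition \ref{Prop_equality}(ii). For the forward direction, suppose $G$ has order $n$, $m$ edges, minimum degree $\delta$ and $\mathrm{sp}(G,k)=f_{t}$. Then equality holds in (\ref{For_clz4}). By Proposition \ref{Prop_equality}(ii), $s=\mathrm{sp}(G,k)$ is an integer with $n/(t+1)\leq s\leq n/t$. Moreover, the degree sequence of $G$ is exactly $P$: $s$ copies of each $\delta+ih$ for $i<t$, and $n-ts$ copies of $\delta+th$. The degree-sum computation in that proof gives $2m=n(\delta+th)-sht(t+1)/2$. So $P$ is graphic, being the degree sequence of $G$.

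For the converse, let $s$ satisfy the two conditions and let $P$ be graphic, realised by a graph $G$. We check the three required properties of $G$.
\begin{itemize}
\item Its order is $n$.
\item Its degree sum is the displayed quantity, so it has $m$ edges.
\item Its minimum degree is $\delta$. Since $n/(t+1)\leq s$ and $s\geq 1$ (because $n\geq 2$), there are $s\geq 1$ copies of $\delta$.
\end{itemize}
All degrees lie in $\{\delta,\ldots,\delta+th\}$ with $n_{\delta+ih}=s$ for $i<t$ and $n_{\delta+th}=n-ts\leq s$. By Lemma \ref{Lemma_sorted}, the values of $P$ differ by multiples of $h$, so each window contains at most one value, and hence $\mathrm{sp}(G,k)=s$. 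By the converse part of Proposition \ref{Prop_equality}(ii), or directly by the same computation of $f_t$ as in Lemma \ref{Lemma_packed}, $s=f_t$.

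For the sufficient condition I will apply Lemma \ref{Lemma_ZZ} to $P$. Its smallest term is $\delta\geq 1$. Its largest term $\Delta_P$ is at most $\delta+th$ (it equals $\delta+(t-1)h$ if $n=ts$). Its sum $2m$ is even. Since $(\Delta_P+\delta+1)^2\leq(2\delta+th+1)^2$, the hypothesis $n\geq(2\delta+th+1)^2/(4\delta)$ yields graphicality. One subtlety: the statement's "whenever" should be read as presupposing the existence of the integer $s$ with the arithmetic condition. I will phrase it so: given such $s$, the size condition on $n$ makes $P$ graphic.

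For $k=0$ the criterion is just the cited Caro--West theorem on packed lists (Theorem 2.4 of \cite{CaroWest}), since with $h=1$ the sequence $P$ is exactly a packed list $P(n,s)$ with the appropriate $\delta$. I would only verify that their packed lists coincide with $P$ (noted after the definition of $\Sigma(n,s)$) and quote the result. The main obstacle is bookkeeping in the converse: making sure $\delta$ truly is the minimum and that the window number is exactly $s$ rather than less. Both are handled by the counts above, $s\geq 1$ and $n_{\delta+ih}=s$.
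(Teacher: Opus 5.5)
Your proposal is correct and follows the paper's proof: the forward direction comes from Proposition \ref{Prop_equality}(ii); the converse holds because the values of $P$ are spaced $h$ apart, so each window meets at most one of them; sufficiency follows from Lemma \ref{Lemma_ZZ} with largest term at most $\delta+th$; and the $k=0$ case is quoted from Caro--West. Your extra bookkeeping just makes explicit what the paper's short proof leaves implicit, namely that $s\geq 1$ so the minimum degree is $\delta$, that the degree sum gives $m$ edges, and that $f_t=s$.
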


\begin{proof}
By Proposition \ref{Prop_equality}(ii) a graph attaining $f_{t}$ has degree sequence $P$ with $s$ as
displayed, and conversely a graph with degree sequence $P$ has $\mathrm{sp}(G,k)=s=f_{t}$, because
the windows $[\delta +ih,\delta +ih+k]$ contain $s$, respectively $n-ts\leq s$, of its vertices and
no other window contains more. The largest term of $P$ is at most $\delta +th$, so Lemma
\ref{Lemma_ZZ} applies as soon as $n\geq (2\delta +th+1)^{2}/(4\delta )$. For $k=0$ the sequence $P$
is a packed list in the sense of \cite{CaroWest}, and \cite[Theorem 2.4]{CaroWest} states that a
packed list with even sum is graphic if and only if its length exceeds its largest term.
\end{proof}

Thus, for $n\geq (2\delta +th+1)^{2}/(4\delta )$, equality holds in (\ref{For_clz4}) {of Theorem \ref{Tm_gen}} for every $s$
with $n/(t+1)\leq s\leq n/t$ for which the display makes $m$ an integer, and the corresponding
average degrees $d$ run through the range $t-1\leq 2x\leq t$ in steps of $ht(t+1)/(2n)$. For $k=0$ and
$t=2x+1$ this is the statement of \cite{CaroWest} that equality can hold when $2d$ is an integer, and
the condition $n\geq 2d-\delta +1$ of \cite[Example 2.3]{CaroWest} is recovered exactly. {We now exhibit a construction for extremal graphs.}


\begin{proposition}
\label{Prop_construction}In the notation of Theorem \ref{Tm_realisable} let $b=n-ts$, and suppose
that $s\geq \delta +(t-1)h+1$, that $s(\delta +ih)$ is even for $0\leq i<t$, and that either $b=0$
or $b\geq \delta +th+1$ with $b(\delta +th)$ even. Let $G_{0}$ be the disjoint union of an
$(\delta +ih)$-regular circulant graph on $s$ vertices for each $0\leq i<t$ and, if $b>0$, of a
$(\delta +th)$-regular circulant graph on $b$ vertices. Then $G_{0}$ has degree sequence $P$ and
$\mathrm{sp}(G_{0},k)=s=f_{t}$; and for $\delta \geq 2$ there is also a connected graph with the same
degree sequence.
\end{proposition}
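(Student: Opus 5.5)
The plan has three parts: realise each block of equal degrees by a regular circulant, check that the disjoint union has the right degree sequence and window number, and then splice the components into one connected graph by double-edge swaps.

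\emph{The circulants exist.} Recall the standard fact: for $N\geq r+1$ with $Nr$ even there is an $r$-regular circulant on $\mathbb{Z}_{N}$. If $r$ is even, join each $j$ to $j\pm 1,\ldots ,j\pm r/2$. If $r$ is odd, then $N$ is even, and we add the antipodal matching $j\sim j+N/2$. The hypotheses supply exactly these conditions.
\begin{itemize}
\item[--] For the components of order $s$: $s\geq \delta +(t-1)h+1>\delta +ih$ for $i<t$, and $s(\delta +ih)$ is even.
\item[--] For the component of order $b$ when $b>0$: $b\geq \delta +th+1$, and $b(\delta +th)$ is even.
\end{itemize}
So every component exists, and $G_{0}$ has exactly $s$ vertices of each degree $\delta +ih$ ($i<t$) and $b=n-ts$ vertices of degree $\delta +th$. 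This is the sequence $P$.

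\emph{The window number.} The degree values differ by multiples of $h$, so any window of $h$ consecutive integers contains at most one value. Its multiplicity is $s$, or $b<s$, using $b\in[0,s)$ as in Theorem \ref{Tm_realisable}. Hence $\mathrm{sp}(G_{0},k)=s$. Its degree sum is $n(\delta +th)-sht(t+1)/2=2m$, so $s=f_{t}$ by the computation in Proposition \ref{Prop_equality}(ii), or equivalently by Theorem \ref{Tm_realisable}.

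\emph{Connectivity for $\delta \geq 2$.} Every component is $r$-regular with $r\geq \delta \geq 2$, so no component is a tree and each contains a cycle. I will merge components by 2-switches, which preserve all degrees. Take two distinct components $C,C'$, an edge $ab$ on a cycle of $C$, and an edge $a'b'$ on a cycle of $C'$. Replace them by $ab'$ and $a'b$. No edges are created twice, because the new edges join different components. The result is connected:
\begin{itemize}
\item[--] $C-ab$ is still connected because $ab$ lies on a cycle, and likewise $C'-a'b'$;
\item[--] the new edge $ab'$ joins these two pieces.
\end{itemize}
The merged component still contains a cycle, for instance one of the untouched cycles of $C$ when $C$ is a circulant of degree at least $2$. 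Iterating over all components yields a connected graph with degree sequence $P$.

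The main obstacle is ensuring that a cycle survives after each merge, so that the next switch still preserves connectivity. It suffices to note that after the switch the path $a\ldots b$ in $C-ab$, together with $b a'$, the path $a'\ldots b'$ in $C'-a'b'$, and $b' a$, forms a cycle. Therefore the merged graph is connected and is not a tree, and induction on the number of components finishes the argument.
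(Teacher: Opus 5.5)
Your argument is correct and is essentially the paper's: regular circulants for each degree class, $\mathrm{sp}(G_{0},k)=s=f_{t}$ because the degrees are spaced $h$ apart, and degree-preserving 2-switches that merge the components one at a time (the paper avoids your cycle invariant by noting that the two new edges reattach both sides of $U-ab$ to the connected $C-cd$, so any edge of $U$ will do). Two harmless slips: in the notation of Theorem \ref{Tm_realisable} one only has $b=n-ts\leq s$, with $b=s$ when $n=(t+1)s$, which still gives window number $s$; and your remark about an untouched cycle of $C$ fails when $C$ is $2$-regular, but your final paragraph's cycle through $ab'$ and $a'b$ repairs it.
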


\begin{proof}
Let $r\geq q+1$ with $qr$ even. The circulant graph on $0,1,\ldots ,r-1$ in which $i$ is joined to
$i\pm 1,\ldots ,i\pm \left\lfloor q/2\right\rfloor $, and also to $i+r/2$ when $q$ is odd, is
$q$-regular and, for $q\geq 2$, contains the Hamiltonian cycle $0,1,\ldots ,r-1$. The hypotheses
make every class admissible, so $G_{0}$ has degree sequence $P$, and $\mathrm{sp}(G_{0},k)=s$ as in
the proof of Theorem \ref{Tm_realisable}. For $\delta \geq 2$ every component is a circulant graph of
degree at least $2$ containing the connection $1$, and deleting one edge from it leaves it connected,
since the Hamiltonian cycle survives, or becomes a Hamiltonian path. Merge the components one at a
time: having merged some of them into a connected graph $U$, choose an edge $ab$ of $U$ and an edge $cd$
of a fresh component $C$, delete $ab$ and $cd$ and add $ac$ and $bd$. Degrees are preserved, $C-cd$ is
connected, and the new edges join the component of $a$ and the component of $b$ in $U-ab$ to $C-cd$,
so the result is connected. After all components have been merged we have a connected graph with degree
sequence $P$.
\end{proof}

\noindent{\textbf{Remark}:  the choice of Cayley graphs in the above construction is for convenience.   There are other regular graphs that can serve in the same role coming from 1-factorization and 2-factorizatons of $K_n$  by using a suitable number of 1-factors and 2-factors \cite{harary1969}. }

\begin{lemma}
\label{Lemma_raise}Let $2m\geq n\delta $, $n\geq 2$, $s=s^{\ast }(n,m,\delta ,k)\geq 2$,
$t=\left\lfloor n/s\right\rfloor $, $E=2m-\Sigma (n,s)$ and $E=q(n-1)+\rho $ with $0\leq \rho <n-1$.
Let $R$ be obtained from $P(n,s)=(p_{1}\leq \cdots \leq p_{n})$ by adding $q$ to $p_{2},\ldots ,p_{n}$
and one more to the last $\rho $ terms. Then:

\begin{itemize}
\item[\textrm{(i)}] $R$ is a $\delta $-sequence with sum $2m$, window number $s$ and largest term at
most $\delta +th+q+1$;

\item[\textrm{(ii)}] $\Sigma (n,s-1)-\Sigma (n,s)<h\left( \frac{n(n-1)}{2s(s-1)}+n\right) $, and
consequently $q<h\left( \frac{(t+1)^{2}}{n}+2\right) $;

\item[\textrm{(iii)}] $t<2x+2$.
\end{itemize}
\end{lemma}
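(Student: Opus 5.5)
For (i) I would work directly with Lemma~\ref{Lemma_sorted}. The increments added to $P(n,s)$ are $0$ for the first term, $q$ for the middle terms and $q+1$ for the last $\rho$ terms. They are nondecreasing in the index, so $R$ stays nondecreasing, and its first term is $\delta$ because $p_1=\delta$ is not raised. The sum is $\Sigma(n,s)+q(n-1)+\rho=\Sigma(n,s)+E=2m$. For the window number, every difference $r_{i+s}-r_i$ is at least $p_{i+s}-p_i\geq h$, again because the increments are nondecreasing. Hence Lemma~\ref{Lemma_sorted} gives $\mathrm{sp}(R,k)\leq s$. Since $R\in\mathcal D$, Theorem~\ref{Tm_sequence} gives $\mathrm{sp}(R,k)\geq s^{\ast}=s$, so equality holds. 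The largest term of $P(n,s)$ is at most $\delta+th$, and it is raised by at most $q+1$.

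For (ii) I would write
\[
\Sigma(n,s-1)-\Sigma(n,s)=h\sum_{i=1}^{n}\left(\left\lfloor \tfrac{i-1}{s-1}\right\rfloor-\left\lfloor\tfrac{i-1}{s}\right\rfloor\right),
\]
which makes sense since $s\geq 2$. Each summand is less than
\[
\frac{i-1}{s-1}-\frac{i-1}{s}+1=\frac{i-1}{s(s-1)}+1 .
\]
Summing over $i$ gives the strict bound $h\bigl(n(n-1)/(2s(s-1))+n\bigr)$, and strictness already comes from $i=1$, where the summand is $0<1$. By minimality of $s^{\ast}$ we have $\Sigma(n,s-1)>2m$, so
\[
E<\Sigma(n,s-1)-\Sigma(n,s).
\]
Dividing by $n-1$ gives
\[
q\leq \frac{E}{n-1}<h\left(\frac{n}{2s(s-1)}+\frac{n}{n-1}\right).
\]
Now use $s-1\geq s/2$ (as $s\geq 2$) and $s>n/(t+1)$ (as $t=\lfloor n/s\rfloor$). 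These give
\[
\frac{n}{2s(s-1)}\leq \frac{n}{s^2}<\frac{(t+1)^2}{n}.
\]
Together with $n/(n-1)\leq 2$ this yields the stated bound on $q$.

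For (iii), $2m\geq\Sigma(n,s)$ gives $d\geq\delta+h\bigl(st(t-1)/2+bt\bigr)/n$ with $b=n-ts$. Hence
\[
x\geq \frac{t\bigl(s(t-1)+2b\bigr)}{2(ts+b)}.
\]
The claim $t<2x+2$ follows once $t\bigl(s(t-1)+2b\bigr)>(t-2)(ts+b)$. Expanding, this is equivalent to $st+bt+2b>0$, which holds because $s,t\geq 1$.

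The only delicate step is the estimate in (ii). There one has to keep the inequality strict and choose the crude bounds $s-1\geq s/2$ and $n/(n-1)\leq 2$ so that the constants come out as stated.
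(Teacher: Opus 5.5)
Your proposal is correct and follows the paper's proof essentially step by step: nondecreasing increments together with Lemma~\ref{Lemma_sorted} and Theorem~\ref{Tm_sequence} for (i), the termwise floor estimate plus minimality of $s^{\ast}$ for (ii), and $\Sigma(n,s)\leq 2m$ for (iii). The differences are only cosmetic: you bound $n/(2s(s-1))\leq n/s^{2}$ directly, and in (iii) you keep the $bt$ term, so the paper's separate treatment of $t=1$ becomes the single inequality $st+bt+2b>0$.
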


\begin{proof}
(i) The sum and the first term are clear. The amounts added are nondecreasing in $i$ for $i\geq 2$,
so $r_{i+s}-r_{i}\geq p_{i+s}-p_{i}\geq h$ for $i\geq 2$, and also for $i=1$ because
$r_{1+s}\geq p_{1+s}\geq p_{1}+h=r_{1}+h$; by Lemma \ref{Lemma_sorted} the window number is at most
$s$, and it is at least $s^{\ast }=s$ by Theorem \ref{Tm_sequence}. The largest term is
$p_{n}+q+1\leq \delta +h\left\lfloor (n-1)/s\right\rfloor +q+1\leq \delta +th+q+1$.

(ii) $\Sigma (n,s-1)-\Sigma (n,s)=h\sum_{y=0}^{n-1}\left( \left\lfloor y/(s-1)\right\rfloor
-\left\lfloor y/s\right\rfloor \right) $, and each term is less than $\frac{y}{s-1}-\frac{y}{s}+1=%
\frac{y}{s(s-1)}+1$. By the minimality of $s^{\ast }$ we have $E<\Sigma (n,s-1)-\Sigma (n,s)$, so
$q\leq E/(n-1)<h\left( \frac{n}{2s(s-1)}+\frac{n}{n-1}\right) $. Now $n<(t+1)s$ gives
$\frac{n}{2s(s-1)}<\frac{t+1}{2(s-1)}\leq \frac{t+1}{s}<\frac{(t+1)^{2}}{n}$, and $\frac{n}{n-1}\leq 2$.

(iii) For $t=1$ there is nothing to prove. For $t\geq 2$, $\Sigma (n,s)\leq 2m$ gives
$nhx=2m-n\delta \geq h\frac{st(t-1)}{2}>h\frac{n}{t+1}\cdot \frac{t(t-1)}{2}$, so
$2x>\frac{t(t-1)}{t+1}=t-2+\frac{2}{t+1}>t-2$.
\end{proof}

{We finish this section with Theorem \ref{Tm_exact} proving that the conditions for optimality developed in steps in  Theorem \ref{Tm_gen} , Theorem  \ref{Tm_sequence}  and Theorem \ref{Tm_realisable}  for integer sequences are realized by graphical sequences  provided that  $n$ is large.} Throughout we write
\[
n_{0}(\delta ,k,d)=\max \left\{ \frac{(2d+5h+1)^{2}}{4\delta },\ \left( \frac{2(d-\delta )}{h}+3\right)
^{2}\right\}
\]
for the threshold appearing in that theorem, so that the maximised bound is exact for every
$n\geq n_{0}(\delta ,k,d)$.
\begin{theorem}
\label{Tm_exact}Let $\delta \geq 1$, $k\geq 0$, $h=k+1$, and let $n$ and $m$ be integers with
$d=2m/n\geq \delta $, $x=(d-\delta )/h$ and
\[
n\ \geq \ \max \left\{ \frac{(2d+5h+1)^{2}}{4\delta },\ (2x+3)^{2}\right\} .
\]
Then
\[
\min \{\mathrm{sp}(G,k):G\text{ of order }n\text{ with }m\text{ edges and }\delta (G)=\delta \}\ =\
\max_{t\geq 1}\left\lceil f_{t}(n,m,\delta ,k)\right\rceil .
\]
\end{theorem}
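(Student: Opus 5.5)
The plan is to prove two inequalities. For the lower bound, Theorem \ref{Tm_gen} gives $\mathrm{sp}(G,k)\geq f_t$ for every $t\geq 1$, and $\mathrm{sp}(G,k)$ is an integer. Hence every graph $G$ of order $n$ with $m$ edges and $\delta(G)=\delta$ satisfies $\mathrm{sp}(G,k)\geq \max_t\lceil f_t\rceil$. By Theorem \ref{Tm_sequence} this maximum equals $s^\ast=s^\ast(n,m,\delta,k)$. It remains to exhibit one graph with these parameters and window number exactly $s^\ast$.

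If $s^\ast=1$, only the lower bound is needed, since any realising graph has $\mathrm{sp}\geq 1$ trivially. We still need such a graph to exist. For this, take $R$ to be $P(n,1)$ raised as in Lemma \ref{Lemma_raise}; the same graphicality check below covers it, or one treats it directly. So assume $s=s^\ast\geq 2$. Let $t=\lfloor n/s\rfloor$ and let $R$ be the raised sequence of Lemma \ref{Lemma_raise}. By part (i), $R$ is a $\delta$-sequence with sum $2m$, which is even, minimum term $\delta\geq 1$ and window number $s$. Therefore any graph with degree sequence $R$ has order $n$, $m$ edges, minimum degree $\delta$ and $\mathrm{sp}(G,k)=s$. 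By Lemma \ref{Lemma_ZZ}, it suffices to verify $n\geq(\Delta_R+\delta+1)^2/(4\delta)$, where $\Delta_R\leq \delta+th+q+1$.

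The main step, and the expected obstacle, is bounding $\Delta_R$ in terms of $d$ and $h$ alone. Lemma \ref{Lemma_raise}(iii) gives $t<2x+2$. Since $2xh=2(d-\delta)$, this yields $\delta+th<\delta+2(d-\delta)+2h=2d-\delta+2h$. Lemma \ref{Lemma_raise}(ii) gives $q<h\bigl((t+1)^2/n+2\bigr)$. Here the second threshold is used: $t+1<2x+3\leq\sqrt n$, so $(t+1)^2/n<1$ and $q<3h$. Then $q\leq 3h-1$ as integers, and
\[
\Delta_R\leq 2d-\delta+2h+3h-1+1=2d-\delta+5h
\]
(up to the slack from the strict inequality $t<2x+2$ combined with integrality). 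Consequently $\Delta_R+\delta+1\leq 2d+5h+1$. The first threshold $n\geq (2d+5h+1)^2/(4\delta)$ then makes Lemma \ref{Lemma_ZZ} applicable, so $R$ is graphic and the minimum equals $s^\ast$.

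Care is needed only in the bookkeeping of the constants. One must make sure that the $+1$ from the $\rho$ extra units together with the strict inequalities really fit inside $5h+1$. If they are off by one, I would use $q\leq E/(n-1)$ more sharply, noting that $\rho>0$ forces $q<E/(n-1)$. One must also treat the degenerate case $s^\ast=1$, where Lemma \ref{Lemma_raise} is stated only for $s\geq 2$. There I would instead apply Lemma \ref{Lemma_ZZ} directly to the sequence that starts from $P(n,1)=(\delta,\delta+h,\ldots,\delta+(n-1)h)$ and is raised. However, $s^\ast=1$ requires $2m\geq\Sigma(n,1)$, which is of order $hn^2/2$. This makes $d$ of order $hn$, contradicting $n\geq (2x+3)^2$ for $n\geq 2$, so this case does not occur under the hypotheses.
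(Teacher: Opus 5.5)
Your proposal is correct and follows the paper's proof essentially step for step. The lower bound comes from Theorem \ref{Tm_gen}. The case $s^{\ast}=1$ is excluded because $\Sigma(n,1)\leq 2m$ forces $2x+1\geq n$, which contradicts $n\geq(2x+3)^{2}$. Lemma \ref{Lemma_raise}(ii),(iii) together with $(t+1)^{2}<n$ give $q\leq 3h-1$ and hence $\Delta_{R}<2d-\delta+5h$, and Lemma \ref{Lemma_ZZ} then shows $R$ is graphic.

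Your off-by-one worry is unfounded. The bounds $q+1\leq 3h$ and $\delta+th<2d-\delta+2h$ already give the needed strict inequality, so you can drop both the hedging and the detour about raising $P(n,1)$.
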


\begin{proof}
The minimum is at least the maximum by Theorem \ref{Tm_gen}. Let $s=s^{\ast }(n,m,\delta ,k)$. If
$s=1$, then $\Sigma (n,1)=n\delta +hn(n-1)/2\leq 2m$, that is $n\leq 2x+1<(2x+3)^{2}\leq n$, which is
absurd; so $s\geq 2$, and Lemma \ref{Lemma_raise} applies. Let $t=\left\lfloor n/s\right\rfloor $ and
let $R$ be the raised sequence. By (iii), $t+1<2x+3$, so $n\geq (2x+3)^{2}>(t+1)^{2}$, and (ii)
gives $q<3h$, that is $q\leq 3h-1$. Hence by (i) the largest term of $R$ is at most
$\delta +th+3h\leq \delta +(t+3)h<\delta +(2x+5)h=2d-\delta +5h$. The sequence $R$ has even sum
$2m$, smallest term $\delta \geq 1$ and largest term less than $2d-\delta +5h$, so by Lemma
\ref{Lemma_ZZ} it is graphic as soon as $n\geq (2d+5h+1)^{2}/(4\delta )$. A graph $G$ with degree
sequence $R$ has minimum degree $\delta $, $m$ edges and $\mathrm{sp}(G,k)=s=\max_{t}\left\lceil
f_{t}\right\rceil $ by Theorem \ref{Tm_sequence}.
\end{proof}


\begin{example}
\label{Ex_exact}Let $\delta =3$, $k=4$ and $d=6$, so that $h=5$ and $x=(d-\delta )/h=3/5$. Theorem
\ref{Tm_exact} applies as soon as
\[
n\ \geq \ \max \left\{ \frac{(2d+5h+1)^{2}}{4\delta },\ (2x+3)^{2}\right\} =\max \left\{ \frac{361}{3},\
\frac{441}{25}\right\} ,\qquad \text{that is}\qquad n\geq 121 .
\]
By Lemma \ref{Lemma_bestt} the best $t$ is $\max \{1,\lceil 2x\rceil \}=2$, and
\[
f_{1}=\frac{2n}{5},\qquad f_{2}=\frac{2n(2-3/5)}{6}=\frac{7n}{15},\qquad f_{3}=\frac{2n}{5},
\]
so the maximised bound is $\left\lceil 7n/15\right\rceil $. Take $n=210$ and $m=630$. The bound is
$7\cdot 210/15=98$, and by Proposition \ref{Prop_equality}(ii) a graph attaining it must have the degree
sequence
\[
3^{(98)}\,8^{(98)}\,13^{(14)},
\]
whose sum is $294+784+182=1260=2m$. Here $s=98$, $t=2$ and $b=n-ts=14$, so
$s\geq \delta +(t-1)h+1=9$, the products $98\cdot 3$ and $98\cdot 8$ are even, and $b=14\geq \delta +th+1=14$
with $14\cdot 13$ even: Proposition \ref{Prop_construction} applies. An optimal graph is therefore the
disjoint union of the $3$-regular circulant on $98$ vertices (join $i$ to $i\pm 1$ and to $i+49$), the
$8$-regular circulant on $98$ vertices (join $i$ to $i\pm 1,\ldots ,i\pm 4$) and the complete graph $K_{14}$,
which is the $13$-regular circulant on $14$ vertices; and by the same proposition a connected graph with this
degree sequence exists as well. Its three degrees are pairwise at distance at least $5=k+1$, so every window
of five consecutive integers contains just one of them and
$\mathrm{sp}(G,4)=\max \{98,98,14\}=98$, which is the bound. Note how far the threshold is from being
necessary: the equality sequence above is graphic already for $n=15$, and Proposition
\ref{Prop_construction} realises it for every $n\equiv 0\ (\mathrm{mod}\ 30)$ with $n\geq 210$.
\end{example}

\section{Application: maximal outerplanar graphs}
\label{Sec_mop}

\subsection{The lower bound}

Throughout, $G$ is a MOP of order $n$ and $n_{j}$ denotes the number of vertices of degree $j$ in
$G$. For $p\geq 2$ we abbreviate
\[
W_{[p,p+2]}=n_{p}+n_{p+1}+n_{p+2},
\]
so that by (\ref{For_window}) every one of these \emph{windows} satisfies
$W_{[p,p+2]}\leq \mathrm{sp}(G,2)$. 

The simplest weighted count already explains the shape of the answer: Lemma \ref{Lemma_u} shows that a maximal
outerplanar graph attaining the bound has to be built on vertices of degrees $2$, $5$ and $8$.
\begin{lemma} 
\label{Lemma_u}Let $u(d)=2$ for $2\leq d\leq 4$, $u(d)=1$ for $5\leq d\leq 7$ and $u(d)=0$ for $d\geq 8$, and put
$\varepsilon (d)=3u(d)-(8-d)$. Then $\varepsilon (d)\geq 0$ for every $d\geq 2$, with equality if and only if
$d\in \{2,5,8\}$, and every MOP $G$ of order $n$ satisfies
\[
9\,\mathrm{sp}(G,2)\ \geq \ 4n+6+\sum_{v\in V(G)}\varepsilon (\deg (v)) .
\]
In particular $\mathrm{MOP}(n,2)\geq \left\lceil (4n+6)/9\right\rceil $.
\end{lemma}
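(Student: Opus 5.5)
This is the case $\delta=2$, $k=2$ (so $h=3$), $t=2$ of Theorem \ref{Tm_gen}, specialised to MOPs. The function $u$ defined in the statement is $u_{2}$, and $\varepsilon$ is $\varepsilon_{2}$. The plan has three steps: identify $u$ and $\varepsilon$ with these functions, run the same double count as in Theorem \ref{Tm_gen}, and then take the ceiling.

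\textbf{Step 1: the identification.} With $\delta=2$ and $h=3$ the aligned windows are $I_{0}=\{2,3,4\}$ and $I_{1}=\{5,6,7\}$. Hence $u_{2}(x)=2\mu(x,0)+\mu(x,1)$, which equals $2$, $1$, $0$ on $[2,4]$, $[5,7]$, $[8,\infty)$ respectively. This is exactly $u$. Also $\delta+th=8$, so $\varepsilon_{2}(x)=3u_{2}(x)-(8-x)=\varepsilon(x)$.

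\textbf{Step 2: nonnegativity and equality.} By Lemma \ref{Lemma_gen}, $\varepsilon(d)\ge 0$ for every $d\ge 2$, with equality exactly when $d=2+3i$ for some $0\le i\le 2$. These are the degrees $d\in\{2,5,8\}$. A direct check also works: on $[2,4]$ we get $\varepsilon(d)=d-2$, on $[5,7]$ we get $d-5$, and on $[8,\infty)$ we get $d-8$.

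\textbf{Step 3: the count.} Sum $3u(\deg v)=(8-\deg v)+\varepsilon(\deg v)$ over all vertices. Since $\sum_v\deg v=4n-6$ in a MOP, this gives
\[
3\sum_v u(\deg v)=8n-(4n-6)+\sum_v\varepsilon(\deg v)=4n+6+\sum_v\varepsilon(\deg v).
\]
On the other hand $\sum_v u(\deg v)=2W_{[2,4]}+W_{[5,7]}$. Each of these windows is at most $\mathrm{sp}(G,2)$, so the left side is at most $3\cdot 3\,\mathrm{sp}(G,2)=9\,\mathrm{sp}(G,2)$, which is the inequality claimed. Dropping the nonnegative $\varepsilon$ terms and using that $\mathrm{sp}$ is an integer gives $\mathrm{MOP}(n,2)\ge\lceil(4n+6)/9\rceil$.

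There is no real obstacle here. The only point needing care is substituting the exact MOP degree sum $4n-6$ in place of $nd$; this is what produces the constant $+6$ that the generic bound (\ref{For_clz4}) only hides inside $d$.
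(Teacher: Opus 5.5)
Your proposal is correct and is essentially the paper's proof. It uses the same weights $u$, the identity $\sum_v u(\deg v)=2W_{[2,4]}+W_{[5,7]}\le 3\,\mathrm{sp}(G,2)$, the MOP degree sum $4n-6$, and the integrality of $\mathrm{sp}(G,2)$; the only cosmetic difference is that you frame it as the case $\delta=2$, $k=2$, $t=2$ of Theorem \ref{Tm_gen} and Lemma \ref{Lemma_gen}, whereas the paper checks $\varepsilon(d)\in\{d-2,d-5,d-8\}$ directly, which you also do.
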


\begin{proof}
For $2\leq d\leq 4$ we get $\varepsilon (d)=d-2\in \{0,1,2\}$, for
$5\leq d\leq 7$ we get $\varepsilon (d)=d-5\in \{0,1,2\}$, and for $d\geq 8$ we get
$\varepsilon (d)=d-8\geq 0$; so $\varepsilon (d)\geq 0$ for every $d\geq 2$, that is
$3u(d)\geq 8-d$, with equality if and only if $d\in \{2,5,8\}$, which is the first assertion. The identity
$2W_{[2,4]}+W_{[5,7]}=\sum_{v}u(\deg (v))$ is immediate from the definition of $u$, and
$W_{[2,4]},W_{[5,7]}\leq \mathrm{sp}(G,2)$, so that with $\sum_{v}\deg (v)=4n-6$,
\begin{align*}
9\,\mathrm{sp}(G,2)\ &\geq \ 3\left( 2W_{[2,4]}+W_{[5,7]}\right) \ =\ \sum_{v}3u(\deg (v))\\
&=\ \sum_{v}\left( 8-\deg (v)\right) +\sum_{v}\varepsilon (\deg (v))\ =\ 4n+6+\sum_{v}\varepsilon (\deg (v)).
\end{align*}
Since $\varepsilon \geq 0$ and $\mathrm{sp}(G,2)$ is an integer, this also gives
$\mathrm{MOP}(n,2)\geq \left\lceil (4n+6)/9\right\rceil $.
\end{proof}

Lemma \ref{Lemma_u} tells us what a construction attaining $4n/9$ must look like. If a sequence of MOPs satisfies
$\mathrm{sp}(G,2)=\frac49n+o(n)$, then $\sum_{v}\varepsilon (\deg (v))=o(n)$ by the lemma, and since
$\varepsilon (d)\geq 1$ for every $d\notin \{2,5,8\}$, all but $o(n)$ of the vertices have degree $2$, $5$ or $8$. If all degrees lie in $\{2,5,8\}$,
then $n_{2}+n_{5}+n_{8}=n$ and $2n_{2}+5n_{5}+8n_{8}=4n-6$ give $n_{5}=\frac{2n}{3}-2n_{8}-2$, and
$\max \{n_{2},n_{5},n_{8}\}$ is minimised for $n_{8}=(n-12)/9$, which yields the densities
\begin{equation}
\frac{n_{2}}{n}\rightarrow \frac{4}{9},\qquad \frac{n_{5}}{n}\rightarrow \frac{4}{9},\qquad
\frac{n_{8}}{n}\rightarrow \frac{1}{9}.  \label{For_dens}%
\end{equation}

This is precisely the degree sequence for which the authors of \cite{CaroLauriZarb} were looking.

Next we record the elementary fact about ears (vertices of degree two in a MOP) that we need.

\begin{lemma}
\label{Lemma_peel}Let $G$ be a MOP of order $n\geq 5$ and let $S$ be its set of ears. Then no two ears are adjacent,
every vertex is adjacent to at most two ears, $n_{2}\leq n/2$, and $G-S$ is a MOP of order at least $3$. Consequently
$n_{3}+n_{4}\geq 2$.
\end{lemma}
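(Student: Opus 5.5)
We will argue directly from the structure of a MOP as a triangulated convex $n$-gon with outer Hamiltonian cycle $C$, and establish the claims in the order stated.

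\emph{Ears are not adjacent.} An ear $v$ lies on exactly one triangle $uvw$, and its two neighbours are its neighbours $u,w$ on $C$; moreover $uw$ is a chord of $C$, since $n\geq 4$. If $u$ were also an ear, its only neighbours would be its two neighbours on $C$. But $u$ is adjacent to $w$, and $w$ is not a neighbour of $u$ on $C$ once $n\geq 4$, a contradiction.

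\emph{Each vertex sees at most two ears.} Every ear is adjacent along $C$ to each of its neighbours. A vertex $x$ has exactly two neighbours on $C$, so at most two ears are adjacent to $x$.

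\emph{The bound $n_2\leq n/2$.} Since ears are pairwise nonadjacent, no two ears are consecutive on the cycle $C$. Hence they form an independent set in $C_n$, which has at most $\lfloor n/2\rfloor$ vertices.

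\emph{$G-S$ is a MOP.} We delete the ears one at a time. Removing an ear $v$ from a triangulated polygon leaves the polygon with $v$ cut off along the chord $uw$, which is again a triangulated convex polygon, hence a MOP. Because ears are nonadjacent, deleting one ear does not change the degree-$2$ status of any other ear: an ear's neighbours are never ears. So we can remove all of $S$ successively, each intermediate graph remaining a MOP as long as at least $3$ vertices remain. Since $|S|\leq n/2$, we get $|V(G-S)|\geq n/2\geq 3$ for $n\geq 6$. For $n=5$ a direct check suffices: the MOP on $5$ vertices has degrees $2,2,3,3,4$, so $|S|=2$ and $G-S$ is a triangle.

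\emph{The inequality $n_3+n_4\geq 2$.} This is the step we expect to be the main obstacle. Our plan is as follows. $H=G-S$ is a MOP of order at least $3$, so it has at least two ears when its order is at least $4$, and every vertex of the triangle when its order is exactly $3$. Let $y$ be an ear of $H$, so $\deg_H(y)=2$. In $G$ we have $\deg_G(y)=2+a_y$, where $a_y\leq 2$ is the number of ears of $G$ adjacent to $y$, so $\deg_G(y)\in\{2,3,4\}$. But $y\notin S$, so $\deg_G(y)\neq 2$, and therefore $\deg_G(y)\in\{3,4\}$. Two distinct ears of $H$ thus give $n_3+n_4\geq 2$. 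When $H$ is a triangle, all three of its vertices have $H$-degree $2$, and the same argument yields $n_3+n_4\geq 3$.

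The delicate points are the small cases and the claim that ears of $G$ remain valid to delete simultaneously. Both are handled by nonadjacency and by $n\geq 5$.
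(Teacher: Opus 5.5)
Your proof is correct and follows the paper's route: the same chain of claims, ending with the same argument that each of the (at least two) degree-$2$ vertices $y$ of $G-S$ has $\deg_G(y)=2+a_y\in\{3,4\}$. The differences are only local. You read non-adjacency, the two-ear bound and $n_2\leq n/2$ directly off the outer cycle (an ear's neighbours are its cycle-neighbours, and ears are independent in $C_n$), where the paper uses a separating vertex, fan ends and double counting. You also spell out the one-at-a-time deletion of the ears, which the paper leaves implicit.
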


\begin{proof}
If two ears $u,v$ were adjacent, then $N(u)=\{v,w\}$ and $N(v)=\{u,w\}$ for a common neighbour $w$ (every edge of a MOP lies in a
triangle), so $w$ would separate $\{u,v\}$ from the rest of $G$, contradicting the outer Hamiltonian cycle of $G$. An ear
adjacent to $v$ is an end of the fan at $v$ (an inner vertex of the fan has three neighbours), so
$v$ is adjacent to at most two ears; counting the $2n_{2}$ edges at the ears from the other side gives $2n_{2}\leq
2(n-n_{2})$, i.e.\ $n_{2}\leq n/2$. Deleting an ear from a MOP of order at least four leaves a MOP, so $G-S$ is a MOP of
order $n-n_{2}\geq \lceil n/2\rceil \geq 3$. It therefore has at least two vertices of degree $2$; such a vertex $v$ is not an ear
of $G$, so it is adjacent to one or two ears of $G$, and $\deg _{G}(v)\in \{3,4\}$.
\end{proof}

{We can now slightly improve upon Lemma \ref{Lemma_u}, proving the best lower bound, by slightly modifying the weights in the window counting.}

\begin{theorem}
\label{Tm_lower}For every $n\geq 5$,
\[
\mathrm{MOP}(n,2)\ \geq \ \left\lceil \frac{4n+10}{9}\right\rceil ,
\]
and equality holds for every $n\equiv 2\ (\mathrm{mod}\ 18)$ with $n\geq 20$.
\end{theorem}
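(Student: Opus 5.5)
We prove the lower bound and the equality statement separately. For the lower bound we start from Lemma \ref{Lemma_u}, which gives $9\,\mathrm{sp}(G,2)\geq 4n+6+\sum_{v}\varepsilon (\deg (v))$. It then suffices to gain $4$ more units, either in $\sum_v\varepsilon$ or in the slack of the two window inequalities $W_{[2,4]}\leq \mathrm{sp}(G,2)$ and $W_{[5,7]}\leq \mathrm{sp}(G,2)$. Lemma \ref{Lemma_peel} already gives $n_{3}+n_{4}\geq 2$, and since $\varepsilon (3)=1$ and $\varepsilon (4)=2$ this contributes at least $2$. So we obtain $9\,\mathrm{sp}(G,2)\geq 4n+8$ at once, and the whole difficulty is to find two more units.

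We argue by contradiction. Suppose $9\,\mathrm{sp}(G,2)\leq 4n+9$, and write $s=\mathrm{sp}(G,2)$. The full count reads
\[
9s=3(s-W_{[2,4]})\cdot 2+3(s-W_{[5,7]})+4n+6+\sum_{v}\varepsilon (\deg (v)) .
\]
Hence $\sum_{v}\varepsilon \leq 3$, $W_{[5,7]}\geq s-1$ and $W_{[2,4]}=s$, and these slacks are shared. In particular at most three vertices have degree outside $\{2,5,8\}$. At least two of them, the ears of $G-S$, have degree $3$ or $4$. This leaves very few configurations: for example two vertices of degree $3$ plus at most one other deviant vertex, or one vertex of degree $3$ and one of degree $4$ with everything else exact.

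Next we bring in one further window, as announced in the introduction. We use $W_{[3,5]}=n_{3}+n_{4}+n_{5}\leq s$, or alternatively $W_{[8,10]}\leq s$, and combine it with the two equations
\[
n_{2}+n_{5}+n_{8}+(\text{deviants})=n,\qquad 2n_{2}+5n_{5}+8n_{8}+\cdots =4n-6 .
\]
Since $W_{[2,4]}=s$ and $W_{[5,7]}\geq s-1$, we get $n_{2}\approx s$ and $n_{5}\approx s$. The window $W_{[3,5]}$ then forces $n_{3}+n_{4}\leq s-n_{5}\leq 1$ or so, which contradicts $n_{3}+n_{4}\geq 2$ except in a tight boundary case.

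The boundary case is where the ear-peeling is used more carefully. In $G-S$, which is a MOP, we look at the degree-$2$ vertices; each of them is adjacent to one or two ears of $G$. We also use that the ears of $G$ attach only to ends of fans, and count the edges between $S$ and $V\setminus S$, which number exactly $2n_{2}$. If almost all vertices have degree $5$ or $8$ and the only deviants are two vertices of degree $3$, we check that $2n_{2}$ cannot be covered: a vertex of degree $5$ or $8$ is adjacent to at most two ears, and the global counts from the two equations leave a parity or deficit of at least one. This produces the missing units.

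I expect this final case analysis to be the main obstacle. My first attempt would be to apply Lemma \ref{Lemma_peel} a second time, to $G-S$ itself, to obtain further vertices of degree $3$ or $4$ (or non-$\{2,5,8\}$ degrees) at the next layer. Only if that fails would I fall back on the explicit ear–edge count. The restriction $n\geq 5$ (and $n\geq 14$ in the abstract) enters precisely so that $G-S$ is large enough to have two non-adjacent ears.

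For equality when $n\equiv 2\ (\mathrm{mod}\ 18)$, $n\geq 20$, we have $\lceil (4n+10)/9\rceil =(4n+10)/9$ exactly. We take the ladder construction of Section \ref{Sec_construction}: a periodic strip in which every period of $18$ vertices has degree pattern $2^{(8)}5^{(8)}8^{(2)}$, matching the densities (\ref{For_dens}), closed by end caps carrying the four exceptional vertices. We then verify directly that every window of three consecutive degrees contains at most $(4n+10)/9$ vertices. The main check is for $W_{[2,4]}$ and $W_{[5,7]}$, which must hit the bound exactly. The windows $W_{[3,5]}$, $W_{[4,6]}$, $W_{[6,8]}$ and so on are smaller because the exceptional vertices are few.
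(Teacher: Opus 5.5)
Your key idea, adding the window $W_{[3,5]}\le s$ to the count of Lemma~\ref{Lemma_u}, is the one the paper uses. As written, though, the proposal does not close the argument. The step ``$n_{3}+n_{4}\le s-n_{5}\le 1$ or so'' is left imprecise. The ``tight boundary case'' it leaves open is deferred to an ear--edge parity count, or a second peeling of $G-S$, and neither is ever carried out.

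In fact there is no boundary case: it comes only from the loose bound $W_{[5,7]}\ge s-1$. Put $a=s-W_{[2,4]}$ and $b=s-W_{[5,7]}$. The assumption $9s\le 4n+9$ gives $6a+3b+\sum_{v}\varepsilon(\deg(v))\le 3$. Since $\sum_{v}\varepsilon(\deg(v))\ge n_{3}+2n_{4}\ge 2$, this forces $a=b=0$, so $W_{[5,7]}=s$ exactly. Then $W_{[3,5]}\le s=W_{[5,7]}$ reads $n_{3}+n_{4}\le n_{6}+n_{7}$, and
\[
3\ \ge\ \sum_{v}\varepsilon(\deg(v))\ \ge\ n_{3}+2n_{4}+n_{6}+2n_{7}\ \ge\ (n_{3}+n_{4})+(n_{6}+n_{7})\ \ge\ 2(n_{3}+n_{4})\ \ge\ 4,
\]
a contradiction. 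No ear counting beyond $n_{3}+n_{4}\ge 2$ is needed.

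The paper does the same computation directly, as a single weighted count. It trades one copy of $W_{[5,7]}$ for $W_{[3,5]}$, using $9s\ge 6W_{[2,4]}+W_{[3,5]}+2W_{[5,7]}$. This raises the defect at degrees $3,4$ by one and lowers it at $6,7$ by one, giving
\[
9s\ \ge\ 4n+6+2n_{3}+3n_{4}+n_{7}+\sum_{d\ge 9}(d-8)n_{d}\ \ge\ 4n+10
\]
with no case analysis.

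For the equality part, citing the ladder $H_{k}$ of Theorem~\ref{Tm_construction} is exactly what the paper does. However, your remark that $W_{[3,5]}$ and $W_{[4,6]}$ are smaller than the bound is wrong. In $H_{k}$ both equal $8k+2=s$. Moreover, the equality case of the weighted count forces $W_{[3,5]}=s$, together with $W_{[2,4]}=W_{[5,7]}=s$, $n_{3}=2$ and $n_{4}=n_{7}=0$.
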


\begin{proof}

We first show that every MOP $G$ of order $n\geq 5$ satisfies
\[
9\,\mathrm{sp}(G,2)\ \geq \ 4n+6+2n_{3}+3n_{4}+n_{7}+\sum_{d\geq 9}(d-8)n_{d}\ \geq \ 4n+10 .
\] 
{We assign weights which are slightly different to those in Lemma \ref{Lemma_u}, as follows: 
\[
w(d)=6\cdot \mathbf{1}\{2\leq d\leq 4\}+1\cdot \mathbf{1}\{3\leq d\leq 5\}+2\cdot \mathbf{1}\{5\leq d\leq 7\},
\]
that is $w(2)=6$, $w(3)=w(4)=7$, $w(5)=3$, $w(6)=w(7)=2$ and $w(d)=0$ for $d\geq 8$. Summing the three
indicators separately,}
\[
\sum_{v}w(\deg (v))=6W_{[2,4]}+W_{[3,5]}+2W_{[5,7]}\leq (6+1+2)\,\mathrm{sp}(G,2)=9\,\mathrm{sp}%
(G,2).
\]
On the other hand $w(d)\geq 8-d$ for every $d\geq 2$, the defect
$\vartheta (d)=w(d)-(8-d)$ being $0$ for $d\in \{2,5,6,8\}$, and $\vartheta (3)=2$,
$\vartheta (4)=3$, $\vartheta (7)=1$, $\vartheta (d)=d-8$ for $d\geq 9$. Hence
\[
\sum_{v}w(\deg (v))=\sum_{v}\left( 8-\deg (v)\right) +\sum_{v}\vartheta (\deg (v))
\]
\[
=8n-(4n-6)+2n_{3}+3n_{4}+n_{7}+\sum_{d\geq 9}(d-8)n_{d},
\]
which is the first required inequality. Finally $n_{3}+n_{4}\geq 2$ by Lemma \ref{Lemma_peel}, so
$2n_{3}+3n_{4}\geq 4$, which gives the second one. Since $\mathrm{sp}(G,2)$ is an integer, this proves
$\mathrm{MOP}(n,2)\geq \left\lceil (4n+10)/9\right\rceil $ for every $n\geq 5$. The equality for
$n\equiv 2\ (\mathrm{mod}\ 18)$, $n\geq 20$, is witnessed by the graphs $H_{k}$ of Theorem
\ref{Tm_construction} below, which have $\mathrm{sp}(H_{k},2)=(4n+10)/9$ for $n=18k+2$.
\end{proof}

We next supply a construction for $n\equiv 2\ (\mathrm{mod}\ 18)$ realising the lower bound.

\subsection{A construction attaining the bound}
\label{Sec_construction}

We now exhibit maximal outerplanar graphs for which Theorem \ref{Tm_lower} is an equality. The
shape of the construction is a nested ladder: $k$ pairwise nested chords, the \emph{rungs}, cut
the polygon into an outer quadrilateral, $k-1$ blocks and an innermost region; each block is a
triangulated $20$-gon which carries two vertices of degree $8$, eight of degree $5$ and eight ears,
and the innermost region is closed by a fixed cap on $18$ vertices which carries three of the four
unavoidable exceptional vertices (degrees $3,6,6$); the fourth, of degree $3$, lies in the outer quadrilateral.

Throughout this section $k\geq 1$ and $n=18k+2$; we work on the convex polygon with vertices
$0,1,\ldots ,n-1$ in this cyclic order, so that the boundary edges are $\{i,i+1\}$ for
$0\leq i\leq n-2$ together with $\{0,n-1\}$. Put
\[
a_{i}=9i,\qquad b_{i}=18k-1-9i\qquad (0\leq i\leq k-2),\qquad c=9k-9,
\]
and let
\[
R_{i}=\{a_{i},b_{i}\}\ \ (0\leq i\leq k-2),\qquad R_{k-1}=\{c,c+17\}
\]
be the rungs (for $k=1$ there are no $a_{i},b_{i}$ and the only rung is $R_{0}=\{0,17\}$). Notice
that $a_{0}<a_{1}<\cdots <a_{k-2}<c$ and
$b_{0}>b_{1}>\cdots >b_{k-2}>c+17$, the last inequality because $b_{k-2}=c+26$; hence
\[
R_{0}\supset R_{1}\supset \cdots \supset R_{k-1}
\]
in the sense that the arcs they cut off are nested, so the rungs are pairwise non-crossing. They
divide the polygon into the quadrilateral $\Omega _{-1}$ on $\{n-3,n-2,n-1,0\}$ lying outside
$R_{0}$, the $20$-gons
\[
\Omega _{i}=\{a_{i},\ldots ,a_{i}+9\}\cup \{b_{i}-9,\ldots ,b_{i}\}\qquad (0\leq i\leq k-2)
\]
lying between $R_{i}$ and $R_{i+1}$, and the $18$-gon $\Omega ^{\mathrm{in}}$ on
$\{c,c+1,\ldots ,c+17\}$ lying inside $R_{k-1}$.

In each $\Omega _{i}$ we use the local labelling
\begin{equation}
\ell \longmapsto a_{i}+\ell \ \ (0\leq \ell \leq 9),\qquad 10+j\longmapsto b_{i}-9+j\ \ (0\leq
j\leq 9),  \label{For_local}%
\end{equation}

under which the two boundary chords of $\Omega _{i}$ are $\{0,19\}=R_{i}$ and
$\{9,10\}=R_{i+1}$.

\begin{lemma}
\label{Lemma_block}The $17$ pairs
\begin{equation}
\begin{array}
[c]{l}%
\{0,2\},\ \{2,4\},\ \{4,6\},\ \{6,8\};\qquad \{0,11\},\ \{0,13\},\ \{0,15\},\ \{0,17\};\\
\{2,11\},\ \{4,11\},\ \{6,11\},\ \{8,11\};\qquad \{11,13\},\ \{13,15\},\ \{15,17\},\ \{17,19\};\\
\{8,10\}
\end{array}
\label{For_block}%
\end{equation}
are pairwise non-crossing chords of the convex $20$-gon $0,1,\ldots ,19$, so that together with its sides
they triangulate it. The triangulated $20$-gon so obtained, drawn in Figure \ref{Fig_block}, is the
\emph{building block} of the construction of Theorem \ref{Tm_construction}: it is the region $\Omega _{i}$
lying between two consecutive rungs, its sides $\{0,19\}$ and $\{9,10\}$ being the rungs $R_{i}$ and
$R_{i+1}$ themselves, along which consecutive blocks are glued.
\end{lemma}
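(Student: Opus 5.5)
The plan is to verify the claim directly. I will split the $20$-gon along one chord, check each half separately, and then confirm the triangulation by listing its triangles. A triangulation of a convex $20$-gon has exactly $20-3=17$ diagonals and $20-2=18$ triangles. So it suffices to show that the $17$ listed pairs are genuine diagonals (no pair is a side $\{i,i+1\}$ or $\{0,19\}$) and are pairwise non-crossing. Non-crossing diagonals of a convex polygon can always be extended to a triangulation, and since $17$ is already the maximum number, these diagonals triangulate it. Throughout I use the standard criterion: chords $\{a,b\}$ and $\{c,e\}$ of the convex polygon, with $a<b$ and $c<e$, cross if and only if $a<c<b<e$ or $c<a<e<b$.

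The key step is to cut along the chord $\{0,11\}$. Every other listed chord has both ends in the arc $\{0,1,\ldots ,11\}$ or both ends in the arc $\{11,12,\ldots ,19,0\}$.
\begin{itemize}
\item The left arc carries $\{0,2\},\{2,4\},\{4,6\},\{6,8\},\{8,10\}$ and $\{2,11\},\{4,11\},\{6,11\},\{8,11\}$.
\item The right arc carries $\{0,13\},\{0,15\},\{0,17\}$ and $\{11,13\},\{13,15\},\{15,17\},\{17,19\}$.
\end{itemize}
Two chords lying in the two different closed arcs determined by $\{0,11\}$ cannot cross, since they can share only the endpoints $0$ or $11$. Hence it remains to check each side separately.

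On the left side, the chords $\{2j,2j+2\}$ for $0\le j\le 4$ cut off the ears $1,3,5,7,9$. The chords $\{2j,11\}$ for $0\le j\le 4$ (counting $\{0,11\}$) form a fan at $11$. Chords of a fan meet only at $11$, and a chord $\{2i,2i+2\}$ against a fan chord $\{2j,11\}$ has no interleaving of endpoints, because $2j\notin (2i,2i+2)$. The right side is the mirror picture: the chords $\{11,13\},\ldots ,\{17,19\}$ cut off the ears $12,14,16,18$, and $\{0,11\},\{0,13\},\{0,15\},\{0,17\}$ (with the side $\{0,19\}$) form a fan at $0$. The same interleaving check applies.

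Finally, as a cross-check and to make the triangulation explicit for later degree counts, I will list the $18$ triangles.
\begin{itemize}
\item Left side: the ear triangles $\{0,1,2\},\{2,3,4\},\{4,5,6\},\{6,7,8\},\{8,9,10\}$ and the fan triangles $\{0,2,11\},\{2,4,11\},\{4,6,11\},\{6,8,11\},\{8,10,11\}$.
\item Right side: the ear triangles $\{11,12,13\},\{13,14,15\},\{15,16,17\},\{17,18,19\}$ and the fan triangles $\{0,11,13\},\{0,13,15\},\{0,15,17\},\{0,17,19\}$.
\end{itemize}
These are $10+8=18$ triangles with disjoint interiors, whose sides are exactly the $20$ polygon sides and the $17$ chords. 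There is no real obstacle here; the only care needed is the bookkeeping so that no crossing pair is overlooked, which the cut along $\{0,11\}$ reduces to two trivial fan-plus-ears configurations. The statement about gluing along $\{0,19\}=R_i$ and $\{9,10\}=R_{i+1}$ is just the local labelling (\ref{For_local}), since both pairs are sides of the $20$-gon.
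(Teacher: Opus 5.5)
Your proof is correct and follows essentially the same route as the paper: the count $17=20-3$, the endpoint-interleaving criterion, and the split into the two chains of ear-cutting chords and the two fans at $0$ and $11$. Your explicit cut along $\{0,11\}$ is a cleaner version of the paper's remark about the disjoint arcs $\{1,\ldots,10\}$ and $\{12,\ldots,19\}$, and the list of $18$ triangles is a harmless extra check. One small addition: in the left-side check of $\{2i,2i+2\}$ against $\{2j,11\}$ you should also note that $11\notin(2i,2i+2)$, which holds since $2i+2\le 10$.
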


\begin{proof}
A convex polygon with $N$ sides is triangulated by any $N-3$ pairwise non-crossing chords, and
$17=20-3$, so only the non-crossing has to be checked. Two chords $\{a,b\}$ and $\{c,d\}$ of the
polygon $0,1,\ldots ,19$, written with $a<b$ and $c<d$, cross if and only if exactly one of $c,d$ lies
strictly between $a$ and $b$. With this criterion the four groups of (\ref{For_block}) are immediate: the
chords $\{0,2\},\{2,4\},\{4,6\},\{6,8\},\{8,10\}$ cut off the pairwise disjoint arcs $1,3,5,7,9$; the chords
$\{11,13\},\{13,15\},\{15,17\},\{17,19\}$ cut off the pairwise disjoint arcs $12,14,16,18$; the fan
$\{0,11\},\{0,13\},\{0,15\},\{0,17\}$ shares the endpoint $0$ and its chords are nested; and the fan
$\{2,11\},\{4,11\},\{6,11\},\{8,11\}$ shares the endpoint $11$ and is nested likewise. Between the groups
there is nothing to check either, because the arcs $\{1,\ldots ,10\}$ and $\{12,\ldots ,19\}$ cut off by the
two fans are disjoint and each of the remaining chords lies inside one of them. All of this is visible in
Figure \ref{Fig_block}.
\end{proof}

\begin{figure}[ht]
\begin{center}
\includegraphics[width=0.40\textwidth]{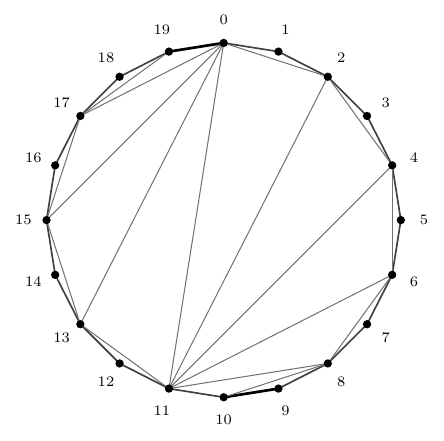}
\end{center}
\caption{The building block: the convex $20$-gon with the $17$ chords (\ref{For_block}). Its two sides
$\{0,19\}$ and $\{9,10\}$, drawn in bold, are the rungs $R_{i}$ and $R_{i+1}$ of Theorem
\ref{Tm_construction}; in the global labelling of that theorem the vertices $0,\ldots ,9$ are
$a_{i},\ldots ,a_{i}+9$ and the vertices $10,\ldots ,19$ are $b_{i}-9,\ldots ,b_{i}$. The two vertices of
degree $8$ of the block are $0$ and $11$.}%
\label{Fig_block}%
\end{figure}

\begin{lemma}
\label{Lemma_cap}Let $\Gamma $ consist of the $15$ chords $\{c+x,c+y\}$ with
\begin{equation}
(x,y)\in \left\{
\begin{array}
[c]{l}%
(0,13),(0,15),(1,3),(1,12),(1,13),(3,5),(3,10),(3,12),\\
(5,7),(5,10),(7,9),(7,10),(10,12),(13,15),(15,17)
\end{array}
\right\} .  \label{For_cap}%
\end{equation}

Then $\Gamma $ triangulates the $18$-gon $\Omega ^{\mathrm{in}}$.
\end{lemma}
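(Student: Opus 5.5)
The plan mirrors the proof of Lemma \ref{Lemma_block}. A convex $18$-gon is triangulated by any $18-3=15$ pairwise non-crossing chords. $\Gamma$ has exactly $15$ pairs, and each pair is a genuine chord (not a side), since no pair $(x,y)$ has $y-x\in\{1,17\}$. So, after translating by $c$ and working on the polygon $0,1,\ldots ,17$, only two things need checking: the $15$ pairs are distinct, which is clear from the list, and they pairwise do not cross. For crossings I use the same criterion as before: with $a<b$ and $c'<d'$, the chords $\{a,b\}$ and $\{c',d'\}$ cross iff exactly one of $c',d'$ lies strictly between $a$ and $b$.

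Rather than checking all $\binom{15}{2}$ pairs, I group the chords by the regions they cut off.

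\textbf{Outer chords.} The chords $\{0,15\},\{15,17\},\{13,15\},\{0,13\}$ cut off the arcs $16$ and $14$. The chord $\{0,13\}$ separates the arc $\{1,\ldots,12\}$ from the rest, and $\{0,15\}$ contains $\{0,13\}$ and $\{13,15\}$. All remaining chords have both endpoints in $\{1,\ldots ,13\}$, so they are nested inside $\{0,13\}$ and cannot cross these four.

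\textbf{Inside $\{0,13\}$.} The chords $\{1,13\}$ and $\{1,12\}$ form a fan at $1$ together with $\{12,13\}$ being a side. Inside $\{1,12\}$ the chords $\{1,3\},\{3,12\}$ and then $\{3,10\},\{10,12\}$ peel off the ears $2$ and $11$. Inside $\{3,10\}$ the chords $\{3,5\},\{5,10\},\{5,7\},\{7,10\},\{7,9\}$ peel off $4$, $6$, $8$ in turn.

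Each chord either shares an endpoint with the one enclosing it or lies inside the arc it cuts off. The whole picture is therefore a nested chain
\[
\{0,15\}\supset\{0,13\}\supset\{1,13\}\supset\{1,12\}\supset\{3,12\}\supset\{3,10\}\supset\{5,10\}\supset\{7,10\},
\]
with the small chords $\{15,17\},\{13,15\},\{1,3\},\{10,12\},\{3,5\},\{5,7\},\{7,9\}$ attached at shared endpoints. Non-crossing then follows immediately from the criterion.

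As an independent consistency check, I will verify that the resulting faces are triangles. The faces are
\[
\{0,15,17\},\ \{15,16,17\},\ \{13,14,15\},\ \{0,13,15\},\ \{0,1,13\},\ \{1,12,13\},\ \{1,2,3\},\ \{1,3,12\},
\]
\[
\{3,10,12\},\ \{10,11,12\},\ \{3,4,5\},\ \{3,5,10\},\ \{5,6,7\},\ \{5,7,10\},\ \{7,8,9\},\ \{7,9,10\},
\]
which is $16=18-2$ triangles, as it should be. The only real obstacle is bookkeeping: confirming that the nesting chain is correctly ordered, with no chord straddling an endpoint of an enclosing chord. The explicit triangle list settles this, since every chord appears as a side of exactly two listed triangles and every polygon side appears in exactly one.
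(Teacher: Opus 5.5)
Your proposal is correct and follows the paper's route: $15=18-3$ genuine chords that are pairwise non-crossing, checked with the same criterion. The paper leaves that check to the figure; you make it explicit, since the seven chords of length two cut off the chord-free vertices $2,4,6,8,11,14,16$ and the remaining eight form your nested chain. One small caveat: the nesting is what proves non-crossing, whereas the edge-incidence count of your triangle list is only a consistency check and would need a further argument to stand on its own.
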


\begin{proof}
We argue in the local labels $0,1,\ldots ,17$; recall that $\{0,17\}$ is the side $R_{k-1}$ of
$\Omega ^{\mathrm{in}}$ and not a chord, and that none of the $15$ listed pairs is a side. That no two of them
cross is visible in Figure \ref{Fig_cap}, where they are drawn, and is confirmed by the criterion used in the
proof of Lemma \ref{Lemma_block}. Finally $15=18-3$, and a family of that many pairwise non-crossing chords of a
convex $18$-gon is a triangulation.
\end{proof}

\begin{figure}[ht]
\begin{center}
\includegraphics[width=0.40\textwidth]{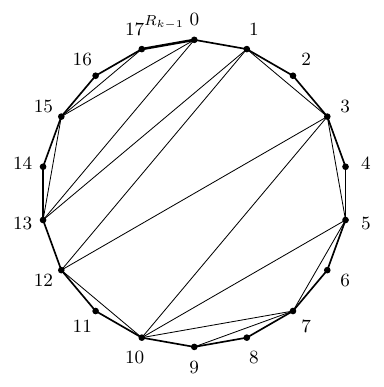}\hfill
\includegraphics[width=0.52\textwidth]{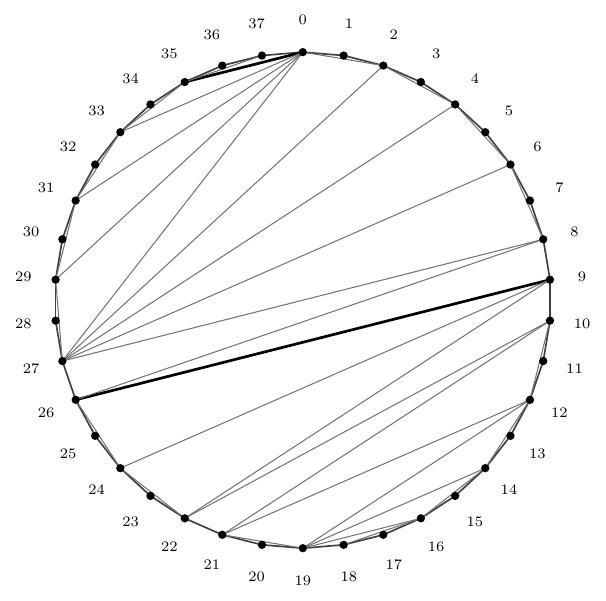}
\end{center}
\caption{Left: the cap $\Gamma $, the $18$-gon $\Omega ^{\mathrm{in}}$ in its local labels $0,\ldots ,17$ with
the $15$ chords (\ref{For_cap}); the side $\{0,17\}$ (bold) is the rung $R_{k-1}$. Right: the whole ladder
$H_{2}$ on $n=18\cdot 2+2=38$ vertices, with its two rungs $R_{0}=\{0,35\}$ and $R_{1}=\{9,26\}$ in bold.
They cut the polygon into the quadrilateral on $\{35,36,37,0\}$, the single block $\Omega _{0}$ on
$\{0,\ldots ,9\}\cup \{26,\ldots ,35\}$ and the cap $\Omega ^{\mathrm{in}}$ on $\{9,\ldots ,26\}$. Its degree
sequence is $2^{(16)}3^{(2)}5^{(16)}6^{(2)}8^{(2)}$ and $\mathrm{sp}(H_{2},2)=18=\lceil (4\cdot 38+10)/9\rceil $.}%
\label{Fig_cap}%
\end{figure}

\begin{theorem}
\label{Tm_construction}Let $k\geq 1$, $n=18k+2$, and let $H_{k}$ be the graph on
$0,1,\ldots ,n-1$ consisting of all $n$ boundary edges of the polygon together with the
$18k-1$ chords
\[
\mathcal{C}(H_{k})=\{n-3,n-1\}\ \cup \ \bigl\{ R_{i}:0\leq i\leq k-1\bigr\} \ \cup \
\bigcup_{i=0}^{k-2}\mathcal{B}_{i}\ \cup \ \Gamma ,
\]
where $\mathcal{B}_{i}$ is the set of $17$ chords of $\Omega _{i}$ given by (\ref{For_block}) in
the local labelling (\ref{For_local}), and $\Gamma $ is given by (\ref{For_cap}). Then
$H_{k}$ is a maximal outerplanar graph with degree sequence
\[
2^{(8k)}\ 3^{(2)}\ 5^{(8k)}\ 6^{(2)}\ 8^{(2k-2)},
\]
and
\[
\mathrm{sp}(H_{k},2)=8k+2=\frac{4n+10}{9}.
\]

\end{theorem}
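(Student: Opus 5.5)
The plan has three steps: show that $H_{k}$ is a MOP, read off its degree sequence by tallying chord endpoints region by region, and then evaluate the windows using (\ref{For_window}).

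\textbf{Step 1: $H_{k}$ is a MOP.} The rungs $R_{0},\ldots ,R_{k-1}$ are pairwise non-crossing by the nesting noted above. The chord $\{n-3,n-1\}$ lies inside the quadrilateral $\Omega _{-1}$, and $\Omega _{-1}$ is triangulated by it together with $R_{0}=\{0,n-3\}$, since $b_{0}=n-3$. Each $\Omega _{i}$ is triangulated by $\mathcal{B}_{i}$ (Lemma \ref{Lemma_block}), and $\Omega ^{\mathrm{in}}$ by $\Gamma $ (Lemma \ref{Lemma_cap}). Chords lying in different regions cannot cross, because every region is bounded by polygon sides and rungs. The chords number $1+k+17(k-1)+15=18k-1=n-3$, so they triangulate the $n$-gon. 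Hence $H_{k}$ is a MOP with $2n-3$ edges.

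\textbf{Step 2: the degree sequence.} Every vertex $v$ satisfies $\deg (v)=2+c(v)$, where $c(v)$ is the number of chords of $\mathcal{C}(H_{k})$ at $v$. I would first compute the local chord counts in a block, counting the rung sides $\{0,19\}$ and $\{9,10\}$ separately. The interior chords give vertex $0$ five chords ($2,11,13,15,17$), vertex $11$ six chords ($0,2,4,6,8,13$), vertices $2,4,6$ three chords each, vertex $8$ three chords ($6,11,10$), vertices $13,15,17$ three chords each, vertices $10$ and $19$ one chord each, and the odd vertices $1,3,5,7,9$ together with $12,14,16,18$ no chords.

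Next comes the gluing. Since $a_{i+1}=a_{i}+9$ and $b_{i+1}=b_{i}-9$, local vertex $9$ of $\Omega _{i}$ is local vertex $0$ of $\Omega _{i+1}$, and local vertex $10$ of $\Omega _{i}$ is local vertex $19$ of $\Omega _{i+1}$. Each rung vertex therefore collects chords from two regions plus the rung itself. Interior rung $a_{i+1}$ gets $0+5+1=6$ chords and degree $8$. Rung $b_{i+1}$ gets $1+1+1=3$ chords and degree $5$. The non-rung vertices of a block have degree $5$ (the vertices $2,4,6,8,13,15,17$), degree $8$ (the vertex $11$), or degree $2$ (the ears).

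At the ends I check $a_{0}=0$ and $b_{0}=n-3$ using the contribution of $\Omega _{-1}$, and $c$ and $c+17$ using the chords of $\Gamma $ at $0$ and $17$. The cap vertices $c+1,\ldots ,c+16$ are counted directly from (\ref{For_cap}); I expect these to produce degrees $3,6,6$ together with some $2$'s and $5$'s. The vertices $n-2$ (an ear) and $n-1$ (degree $3$) come from $\Omega _{-1}$. Adding everything up should give $2^{(8k)}3^{(2)}5^{(8k)}6^{(2)}8^{(2k-2)}$. As a check, the degree sum must equal $4n-6$: indeed $16k+6+40k+12+16k-16=72k+2=4(18k+2)-6$.

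\textbf{Step 3: the windows.} By (\ref{For_window}) the relevant windows are
\[
W_{[2,4]}=8k+2,\quad W_{[3,5]}=8k+2,\quad W_{[4,6]}=8k+2,\quad W_{[5,7]}=8k+2,\quad W_{[6,8]}=2k,
\]
and every other window is smaller. Hence $\mathrm{sp}(H_{k},2)=8k+2$, and since $4(18k+2)+10=72k+18=9(8k+2)$ this equals $(4n+10)/9$.

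\textbf{Main obstacle.} The risky part is Step 2, specifically the bookkeeping at the shared rung vertices and at the two ends. These are $a_{0}=0$ and $b_{0}=n-3$, which meet $\Omega _{-1}$, and $c$ and $c+17$, which meet the cap; the case $k=1$, where the only rung is $\{0,17\}$, also needs care. I would handle this with an explicit table of incidence contributions, one row per region type, and verify the total count of each degree against the prescribed multiplicities. As a sanity check I would confirm the result on $H_{2}$ against Figure \ref{Fig_cap}.
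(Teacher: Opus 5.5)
Your plan is correct and follows the paper's proof step for step: you show $H_k$ is a MOP by triangulating region by region and counting the $n-3$ chords, then tally chord endpoints, and then evaluate the windows. Your tally uses local counts in the block glued along the rungs, giving $0+5+1$ chords at $a_{i+1}$ and $1+1+1$ at $b_{i+1}$, whereas the paper reads the degrees off directly in global labels. The cap tally, which you only anticipate, does come out as needed. By (\ref{For_cap}) the degrees of $c,c+1,\ldots ,c+17$ are $(5,5,2,6,2,5,2,5,2,3,6,2,5,5,2,5,2,5)$, that is, seven ears, eight vertices of degree $5$, two of degree $6$ and one of degree $3$. Together with $\deg (n-2)=2$ and $\deg (n-1)=3$, this yields the stated degree sequence.
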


\begin{proof}
The listed pairs are pairwise distinct: two of them lying in different regions could coincide only
if both were the rung shared by these regions, and no pair of (\ref{For_block}) is $\{0,19\}$ or
$\{9,10\}$, no pair of (\ref{For_cap}) is $\{0,17\}$. Hence the number of chords is
$1+k+17(k-1)+15=18k-1=n-3$. None of them is a boundary edge: the gap between the two endpoints of
each listed pair is at least $2$, and the side $\{0,n-1\}$ does not occur, because $n-1$ belongs
to $\Omega _{-1}$ only, whose single chord is $\{n-3,n-1\}$.

Notice next that the rungs are pairwise non-crossing, as observed above, every chord
of $\mathcal{B}_{i}$ has both endpoints in $\Omega _{i}$, every chord of $\Gamma $ has both
endpoints in $\Omega ^{\mathrm{in}}$, and $\{n-3,n-1\}$ has both endpoints in $\Omega _{-1}$.
Two chords lying in two different regions of a non-crossing family cannot cross, inside
$\Omega _{i}$ the assertion is Lemma \ref{Lemma_block}, inside $\Omega ^{\mathrm{in}}$ it is Lemma
\ref{Lemma_cap}, and $\Omega _{-1}$ carries a single chord. Hence $\mathcal{C}(H_{k})$ consists of
$n-3$ pairwise non-crossing chords, so it triangulates the polygon and $H_{k}$ is a MOP.

We read off the degrees. Fix $i\leq k-2$ and write $a=a_{i}$, $b=b_{i}$. The vertex $a$ has the two
boundary neighbours $a\pm 1$ and the six chords $\{a,a+2\}$, $R_{i}=\{a,b\}$ and
$\{a,b-2t\}$ for $t=1,2,3,4$, so $\deg (a)=8$; symmetrically $b-8$ has the boundary neighbours
$b-9,b-7$ and the six chords $\{a+2t,b-8\}$ for $t=0,1,2,3,4$ and $\{b-8,b-6\}$, so
$\deg (b-8)=8$. For $t=1,2,3$ the vertex $a+2t$ has the boundary neighbours $a+2t\pm 1$ and the
three chords $\{a+2t-2,a+2t\}$, $\{a+2t,a+2t+2\}$, $\{a+2t,b-8\}$, so $\deg (a+2t)=5$; the vertex
$a+8$ has $a+7,a+9$ and the chords $\{a+6,a+8\}$, $\{a+8,b-8\}$, $\{a+8,b-9\}$, so
$\deg (a+8)=5$; and for $t=1,2,3$ the vertex $b-2t$ has $b-2t\pm 1$ and
$\{b-2t-2,b-2t\}$, $\{b-2t,b-2t+2\}$, $\{a,b-2t\}$, so $\deg (b-2t)=5$. The vertex $b_{i}$ has, for
$i\geq 1$, the boundary neighbours $b_{i}\pm 1$ and the chords $\{a_{i-1}+8,b_{i-1}-9\}$,
$R_{i}$ and $\{b_{i}-2,b_{i}\}$, so $\deg (b_{i})=5$, and for $i=0$ the vertex $b_{0}=n-3$ has
$n-4,n-2$ and the chords $R_{0}$, $\{n-3,n-1\}$, $\{b_{0}-2,b_{0}\}$, so again $\deg (b_{0})=5$.
The eight vertices $a+2t-1$ and $b-2t+1$ with $1\leq t\leq 4$ have no incident chord and are ears.
Thus each block contributes eight ears, eight vertices of degree $5$ and two of degree $8$.

On the inner cap, $c$ has the boundary neighbours $c\pm 1$ and the chords
$\{c,c+13\},\{c,c+15\},R_{k-1}$, so $\deg (c)=5$, and $c+17$ has $c+16,c+18$ and the chords
$\{a_{k-2}+8,b_{k-2}-9\}$ (for $k\geq 2$; for $k=1$ this role is played by the outer chord
$\{n-3,n-1\}=\{17,19\}$), $R_{k-1}$, $\{c+15,c+17\}$, so $\deg (c+17)=5$. Reading off
(\ref{For_cap}) in the same way one gets
\[
\bigl( \deg (c),\deg (c+1),\ldots ,\deg (c+17)\bigr) =(5,5,2,6,2,5,2,5,2,3,6,2,5,5,2,5,2,5),
\]
so the cap contributes seven ears, eight vertices of degree $5$, two of degree $6$ and one of
degree $3$; and $\deg (n-2)=2$, $\deg (n-1)=3$. Since the sets $\{a_{i},\ldots ,a_{i}+8\}$,
$\{b_{i}-8,\ldots ,b_{i}\}$ $(0\leq i\leq k-2)$, $\{c,\ldots ,c+17\}$ and $\{n-2,n-1\}$ partition
$\{0,\ldots ,n-1\}$, we obtain
\[
n_{2}=8(k-1)+7+1=8k,\quad n_{3}=2,\quad n_{5}=8(k-1)+8=8k,\quad n_{6}=2,\quad n_{8}=2(k-1),
\]
which indeed sum to $18k+2=n$ and give
$2\cdot 8k+3\cdot 2+5\cdot 8k+6\cdot 2+8(2k-2)=72k+2=4n-6$.

Finally the windows are
\[
W_{[2,4]}=n_{2}+n_{3}=8k+2,\quad W_{[3,5]}=n_{3}+n_{5}=8k+2,\quad
W_{[4,6]}=W_{[5,7]}=n_{5}+n_{6}=8k+2,
\]
\[
W_{[6,8]}=n_{6}+n_{8}=2k,\qquad W_{[7,9]}=W_{[8,10]}=2k-2,
\]
and all further windows are empty, so $\mathrm{sp}(H_{k},2)=8k+2$; and
$9(8k+2)=72k+18=4(18k+2)+10=4n+10$.
\end{proof}

Notice the perfect balance $W_{[2,4]}=W_{[3,5]}=W_{[4,6]}=W_{[5,7]}$: this is what makes $H_{k}$
optimal. In the notation of Theorem \ref{Tm_lower} we have $n_{4}=n_{7}=0$ and $n_{3}=2$, so the
middle term there equals $4n+6+4=4n+10$ exactly.

For the other residue classes of $n$ modulo $18$ we have found seventeen further caps with which the same ladder
attains the bound as well; we have verified by computer that the resulting graphs are maximal outerplanar and that
their window number equals $\left\lceil (4n+10)/9\right\rceil $ for every order in the range covered, so that
presumably $\mathrm{MOP}(n,2)=\left\lceil (4n+10)/9\right\rceil $ for every $n\geq 14$. The caps and the
verification are in the supplementary material; here we do not use them, and prove instead that the bound is attained
up to an additive constant for every $n$.

{We are now ready to show that $4n/9$ is the right asymptotic bound for $\mathrm{MOP}(n,2)$.}

\begin{theorem}
\label{Tm_main}For every $n\geq 5$,
\[
\mathrm{MOP}(n,2)\ \geq \ \left\lceil \frac{4n+10}{9}\right\rceil ,
\]
with equality when $n\equiv 2\ (\mathrm{mod}\ 18)$, and $\mathrm{MOP}(n,2)\leq \left\lceil (4n+10)/9\right\rceil +43$ for
every $n\geq 20$. In particular $\mathrm{MOP}(n,2)=4n/9+O(1)$.
\end{theorem}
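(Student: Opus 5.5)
The lower bound and the equality case for $n\equiv 2\ (\mathrm{mod}\ 18)$ are already Theorem \ref{Tm_lower} together with Theorem \ref{Tm_construction}. So the plan concentrates on the upper bound $\mathrm{MOP}(n,2)\leq \lceil (4n+10)/9\rceil +43$ for every $n\geq 20$, and it proceeds by padding a ladder $H_{k}$.

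Given $n\geq 20$, write $n=18k+2+r$ with $0\leq r\leq 17$ and $k\geq 1$. Start from $H_{k}$, which has order $18k+2$ and $\mathrm{sp}(H_{k},2)=8k+2$ by Theorem \ref{Tm_construction}. Enlarge it to a MOP of order $n$ by attaching $r$ new vertices one at a time, each as an ear on a boundary edge. Stacking a new vertex on the boundary edge $\{u,u+1\}$ keeps the graph a MOP. It raises $\deg(u)$ and $\deg(u+1)$ by one each and creates one new vertex of degree $2$.

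To control the spread, attach all $r$ ears along boundary edges in a bounded region, say the quadrilateral $\Omega_{-1}$ or the cap, in the form of a short fan or zigzag path. This way only $O(1)$ old vertices change degree, and each by at most $2$.

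Now compare windows. In $H_{k}$ every window $W_{[p,p+2]}$ is at most $8k+2$. In the new graph $G$, every window differs from the corresponding window of $H_{k}$ only through the new vertices and the vertices whose degree changed. A crude count bounds this change: each new vertex adds at most $1$ to a window, and each modified vertex can move into a window it was not in, adding at most $1$. With $r\leq 17$ new vertices and at most $2r\leq 34$ modified old vertices, we get
\[
\mathrm{sp}(G,2)\leq 8k+2+r+2r\leq 8k+2+51 .
\]
This naive estimate gives $+51$, not $+43$. The constant has to be sharpened by a more careful placement. Two observations do this. First, the new ears contribute only to windows containing degree $2$. Second, if the attachment is a fan at a single old vertex $u$ (all new ears adjacent consecutively), then only $u$ and at most two endpoints change degree.

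Concretely, the attachment works as follows. Attach a zigzag strip of $r$ vertices on the boundary edge $\{n-2,n-1\}$ so that interior strip vertices get degree about $4$ and only the two old endpoints are modified. The extra count in any window is then at most $r+2$, plus a few vertices of degree $3$ or $4$. After that, compare $8k+2$ with $\lceil(4n+10)/9\rceil$. Since $n=18k+2+r$, the lower bound is $8k+2+\lceil 4r/9\rceil$, so
\[
\mathrm{sp}(G,2)-\Bigl\lceil \tfrac{4n+10}{9}\Bigr\rceil \leq (r+2+c)-\Bigl\lceil \tfrac{4r}{9}\Bigr\rceil ,
\]
which is comfortably below $43$ for $r\leq 17$.

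The main obstacle is the bookkeeping: verifying exactly which old degrees change and confirming that no window gains more than the claimed amount. If this proves delicate, I will fall back on the crude bound above. I expect that the stated $43$ arises from some such crude count, for example counting $17$ new vertices, $2\cdot 17$ degree increments of old vertices, and adjustments, with $\lceil 4r/9\rceil$ recovered. The final statement $\mathrm{MOP}(n,2)=4n/9+O(1)$ follows immediately from the two bounds.
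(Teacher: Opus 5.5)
Your proof is correct, but you misjudged your own crude count, and that count is the paper's argument. The paper attaches $r$ ears to distinct outer edges of $H_k$. Each ear raises every window by at most $3$ (one new vertex of degree $2$, two old vertices shifted by one), so $\mathrm{sp}(G,2)\le 8k+2+3r$.

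Your verdict ``$+51$, not $+43$'' comes from measuring this against $8k+2$. The right comparison is with $\lceil (4n+10)/9\rceil = 8k+2+\lceil 4r/9\rceil$, which you compute yourself a few lines later. Since $3r-\lceil 4r/9\rceil\le 51-8=43$, with equality at $r=17$, the crude count already gives the stated constant and no sharpening is needed.

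Your refined route is still a genuine and valid improvement, once the bookkeeping is stated cleanly. Glue a triangulated $(r+2)$-gon (for example your zigzag strip) to $H_k$ along one boundary edge $\{u,v\}$. The result is a MOP of order $n$, and among the old vertices only $u$ and $v$ change degree. So each window grows by at most $r+2$, giving $\mathrm{sp}(G,2)\le \lceil (4n+10)/9\rceil + r+2-\lceil 4r/9\rceil \le \lceil (4n+10)/9\rceil + 11$.

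Two details in your write-up need fixing:
\begin{itemize}
\item Drop the extra term $c$ for ``vertices of degree $3$ or $4$''. Those are among the $r$ new vertices you already counted.
\item Your remark that new ears only meet windows containing degree $2$ is false for a strip, whose new vertices mostly have degree $4$. It is also unnecessary, since a new vertex contributes at most $1$ to any window whatever its degree.
\end{itemize}

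The paper's version is shorter and does not care where the ears go. Yours costs one extra observation and gives a constant of $11$ instead of $43$.
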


\begin{proof}
The lower bound is Theorem \ref{Tm_lower} (which holds for every $n\geq 5$), and the equality for $n=18k+2$ is Theorem \ref{Tm_construction}. For the
upper bound let $n\geq 20$, $k=\left\lfloor (n-2)/18\right\rfloor \geq 1$ and $r=n-18k-2\in \{0,\ldots ,17\}$, and let
$G$ be obtained from $H_{k}$ by attaching an ear to each of $r$ distinct outer edges; $G$ is a MOP of order $n$.
Attaching one ear adds a vertex of degree $2$ and raises the degrees of two vertices by one, so every window count
$W_{[p,p+2]}$ increases by at most $3$. Since $\left\lceil (4n+10)/9\right\rceil =8k+2+\left\lceil 4r/9\right\rceil $,
we get
\[
\mathrm{sp}(G,2)\leq \mathrm{sp}(H_{k},2)+3r=\left\lceil \frac{4n+10}{9}\right\rceil +3r-\left\lceil \frac{4r}{9}\right\rceil
\leq \left\lceil \frac{4n+10}{9}\right\rceil +43 .
\]
\end{proof}

\section{Concluding remarks}
{We have developed a weighted counting argument based on the notion of windows, which optimizes a not necessarily graphical version of $\mathrm{sp}(G,k)$  via $\delta$-sequences.   This is done in Theorem \ref{Tm_gen}.  We then showed via Proposition \ref{Prop_equality} and Theorem \ref{Tm_sequence}, that optimal $\delta$-sequences force the elements of the sequence to be clusters in gaps forming arithmetic progressions.  Lastly, we show, in Theorem \ref{Tm_exact}, that for $n$ large enough and  fixed values of $\delta$, $k$,  and $d$, the $\delta$-sequences identified in Theorem \ref{Tm_sequence} are graphical and hence optimal, and we gave a construction in Proposition \ref{Prop_construction}.}

{For maximal outerplanar graphs we completed the asymptotic bound of $\mathrm{MOP}(n,k )$ which appeared in \cite{CaroLauriZarb} by proving the missing case $\mathrm{MOP}(n,2) \sim 4n/9$  in Theorem  \ref{Tm_gen} of \cite{CaroLauriZarb}. }

{The same counting argument applied to maximal planar graphs is the subject of the companion paper \cite{P2}, where we asymptotically solved the questions raised in Problem 2 of \cite{CaroLauriZarb}.}

We conclude with a problem concerning the optimal value of $n_{0}(\delta ,k,d)$.

\begin{problem}
\label{Pr_threshold}Determine the smallest $n_{0}(\delta ,k,d)$ from which the maximised bound of
Theorem \ref{Tm_gen} is exact for all graphs. The data suggest $n_{0}=2d+O(h)$ rather than the
quadratic bound of Theorem \ref{Tm_exact}.
\end{problem}
\bigskip

\bigskip\noindent\textbf{Disclosure on AI.}~~{In preparing this paper the authors used large language models of
Anthropic (Claude Opus~5) for computer experiments and exploration concerning $\mathrm{MOP}(n,2 )$, as well as helping to implement the window counting, checking computations and proofs, and for drafting and editing parts of the text. The authors take full responsibility for its content.}


\end{document}